\documentclass[11pt]{amsart}
\usepackage{latexsym}
\usepackage{amsfonts}
\usepackage{color}

\setlength{\textwidth}{140mm}
\setlength{\textheight}{210mm}
\setlength{\oddsidemargin}{.25in}
\setlength{\evensidemargin}{.25in}
\setlength{\topmargin}{-0.2cm}
\setlength{\parskip}{.05in}
\setlength{\hoffset}{-0.4cm}
\setlength{\headheight}{12pt}
\setlength{\headsep}{25pt}

\newcommand{\field}[1]{\mathbb{#1}}
\newcommand{\C}{\field{C}}

\newcommand{\K}{\field{K}}
\newcommand{\N}{\field{N}}

\newcommand{\PP}{\field{P}}
\newcommand{\R}{\field{R}}

\newtheorem{defi}{Definition}[section]
\newtheorem{ex}[defi]{Example}
\newtheorem{lem}[defi]{Lemma}
\newtheorem{theo}[defi]{Theorem}

\newtheorem{pr}[defi]{Proposition}
\newtheorem{re}[defi]{Remark}

\font\tenmsy=msbm10

\def\Bbb#1{\hbox{\tenmsy#1}} 
\setcounter{section}{0}

\title[Asymptotic critical values ]{Reaching  generalized critical values of
a polynomial} \makeatletter

\@addtoreset{equation}{section}
\makeatother
\author{Zbigniew Jelonek and  Krzysztof Kurdyka}
\address[Z. Jelonek]{Instytut Matematyczny\\
Polska Akademia Nauk\\
\'Sniadeckich 8, 00-956 Warszawa,
Poland }
\email{najelone@cyf-kr.edu.pl}
\address[K. Kurdyka ] {LAMA Laboratoire de Math\'ematiques, UMR 5127
Uni\-ver\-si\-t\'e de Savoie, Campus scientifique, F-73376 Le Bourget
du Lac cedex, France}
\email{kurdyka@univ-savoie.fr}

\keywords{polynomial mapping, fibration, bifurcation points,
Malgrange's Condition,
nonproperness  set of  a polynomial mapping.}
\subjclass[2010]{Primary 14D06; Secondary 14Q20}
\thanks{The first author was partially supported by  Universit\'e de Savoie  and by NCN(Poland), 2014-2017. The second author was   partially supported by
ANR(France) grant STAAVF}

\date {\today}


\begin{document}
\begin{abstract}
Let $f: \K^n \to \K$ be a polynomial, $\K=\R, \,\C$.  We give an algorithm to compute the set of generalized
critical values. The algorithm uses a finite dimensional space of  rational arcs along which we can reach  all generalized critical values of $f$.

\end{abstract}

\bibliographystyle{alpha}

\maketitle

\section{Introduction.}

Let $f:\K^n \to \K $ be a polynomial ($\K=\R$ or $\K=\C$). Over
forty  years ago R. Thom \cite{thom}  proved that $f$ is a $C^\infty$-fibration
outside a finite subset of the target; the smallest such a set is called {\it the
bifurcation set of} $f$, we  denote it by $B(f)$. In a natural
way appears a fundamental question: how to determine the set
$B(f)$ ?

Let us recall that in general
the set $B(f)$ is bigger than $K_0(f)$, the set
of critical values of $f$. It also contains  the set $B_\infty (f)$
of bifurcations points at infinity. Roughly speaking, 
$B_\infty (f)$ consists of points at which $f$ is not a locally trivial
fibration at infinity (i.e., outside a large ball). For $n=2$ in the complex case the set  $B_\infty (f)$ 
can be  effectively  computed (see e.g., \cite{HL}, \cite{Su}). 
In the real  case  and $n=2$ the set  $B_\infty (f)$   has an explicit description, see \cite{CP}.
For $n\ge3$ the computation of $B_\infty (f)$ is a challenging  open problem even for $\K=\C$.
To control the set $B_\infty (f)$ one can use the set of {\it
asymptotic critical values of} $f$
$$K_\infty (f)=\{ y \in \K :\exists {
x_{\nu} \in \K^n,  \, x_{\nu}\rightarrow\infty} \ s.t. \ f(x_{\nu})\rightarrow y, \, \Vert
x_{\nu}\Vert \Vert d f(x_{\nu})\Vert\rightarrow 0\}.$$ If $c\notin
K_\infty (f)$, then it is  usually  said that $f$ satisfies {\it
Malgrange's condition}  at $c$. It is known (\cite{par},
\cite{par2}) that $B_\infty (f)\subset K_\infty (f)$. We call
$K(f)=K_0(f)\cup K_\infty (f)$ the  {\it 
 set of generalized crtitical values of } $f$. Thus  in general
$B(f)\subset K(f)$. In the case $\K=\C$ we gave  in \cite{j-k} an algorithm to
compute the set $K(f)$.

 In the
 real case, that is,  for  a given real polynomial $f:\R^n \to \R$ we can compute  $K (f_\C)$,  the set of generalized critical values of the complexification  $f_\C$  of $f$.
  However in general   the set $K_\infty(f)$  of  asymptotic critical values of $f$ may be  smaller than
  $\R\cap K _\infty(f_\C)$. Precisely, it  is possible  (see Example
\ref{ex1}) that there exist a sequence $x_{\nu}\in \C^n \setminus\R^n$ with 
  $\Vert x_{\nu} \Vert \to \infty$ such that $ f(x_{\nu})\rightarrow y\in \R$ and $\Vert
x_{\nu}\Vert \Vert d f(x_{\nu})\Vert\rightarrow 0$, but there is no  sequence $x_{\nu} \in \R^n$ with this property.
To our best knowledge
 no method was known  to detect  algorithmically this situation.

In the paper we propose another approach to the  computation of  generalized  critical values which works both
in  the complex and in the real case. The main new idea is to use a finite dimensional space of  rational arcs along which we can reach  all asymptotic critical values.

Asymptotic and generalized critical values appear for instance  in the problem of optimization of real  polynomials
(see e.g.,  papers of H\`a  and Pham \cite{huj1}, \cite{huj2}).  In fact,  as they observed,  if a polynomial $f$ is bounded from below then $\inf f \in K(f)$. Numerical and complexity  aspects of this approach to the optimization
of polynomials were studied recently by M. Safey El Din \cite{safey1}, \cite{safey2}.

{\it Acknowledgments.} We thank the referees   for their careful and patient  reading of the first  version of our manuscript.

\section{Preliminaries}

\indent
   Here we assume that $\K=\C.$ If $f: X \to Y$ is a dominant, generically finite polynomial  map of smooth affine varieties   and  $\{ x \}$  is  an  isolated
component  of  the  fiber $f^{-1}  (f(x))$, then  we  denote
the multiplicity of  $f$ at  $x$ by  ${\rm mult}_x (f)$
and the number of points in a generic fiber of $f$ by $\mu (f)$.

Let $X, Y$ be  affine varieties.  Recall that a  mapping $F:X\rightarrow Y$ is {\it not
proper} at  a point $y \in Y$ if there  is  no  neighborhood
$U$  of  $y$  such  that $F^{-1} (\overline{U})$ is compact. In
other words, $F$ is  not proper at $y$ if there is  a sequence
$x_\nu \rightarrow\infty $ such that $F(x_\nu)\rightarrow y $. Let
$S_F$ denote the set of points at which the mapping $F$ is  not
proper.  The   set $S_F$ has the following properties
(see \cite{jel}, \cite{jel1}, \cite{jel2}):

\begin{theo} \label{setSF} Let $X\subset \C^k$ be an irreducible variety of dimension $n$ and let
$F = (F_1,\dots , F_m): X \rightarrow \C^m$ be a generically finite
polynomial  mapping. Then  the  set $S_F$ is an algebraic subset
of $\C^m$ and it is either empty or of  pure dimension $n-1$.
Moreover, if $n=m$ then   $${\rm deg} \  S_F \leq  \frac{
D(\prod^n_{i=1} {\rm deg} \ F_i) - \mu (F)} {\min_{1\le i \le
n}\deg\ F_i},$$ where $D=\deg X$ and $\mu (F)$ denotes the
geometric degree of $F$ (i.e., the number of points in a
generic fiber of $F$).
\end{theo}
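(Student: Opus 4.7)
\medskip

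The plan is to study $S_F$ via a compactification of the graph of $F$. First I would choose a projective closure $\overline X\subset\PP^k$ of $X$ and let $\overline\Gamma\subset \overline X\times\PP^m$ be the Zariski closure of the graph $\Gamma_F\subset X\times\C^m$ of $F$. Denote by $\rho:\overline\Gamma\to\overline X$ and $\pi:\overline\Gamma\to\PP^m$ the two projections; $\rho$ is birational and projective, and $\pi$ is projective. Setting $X_\infty:=\overline X\setminus X$, the key tautological observation is that $y\in\C^m$ lies in $S_F$ exactly when there is $z\in\overline\Gamma$ with $\pi(z)=y$ and $\rho(z)\in X_\infty$; equivalently,
\[
S_F \;=\; \pi\bigl(\rho^{-1}(X_\infty)\bigr)\cap\C^m .
\]
Since $\pi$ is closed (being projective), $S_F$ is automatically algebraic in $\C^m$.

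Next I would establish the pure-dimensionality. Because $X$ is affine irreducible of dimension $n$, $X_\infty$ is pure of dimension $n-1$ (it is a hyperplane section of $\overline X$), and in particular it is cut out locally by a single equation in $\overline X$. Pulling back through the birational projective morphism $\rho$, the scheme $\rho^{-1}(X_\infty)$ is locally principal in the $n$-dimensional variety $\overline\Gamma$, hence every component of it has dimension at least $n-1$; and since $\rho$ is birational, no component of $\rho^{-1}(X_\infty)$ can dominate $\overline X$, so no component can have dimension $n$. Thus $\rho^{-1}(X_\infty)$ is pure of dimension $n-1$, and consequently $S_F$ has dimension at most $n-1$. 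I expect the main obstacle here to be the \emph{lower} bound: one must exclude components of $S_F$ of dimension $<n-1$. The plan is to argue that for a generic point $z$ of any component $W$ of $\rho^{-1}(X_\infty)$, the restricted map $\pi|_W$ has generically finite fibers, using that a positive-dimensional fiber would force a projective rational curve inside $\rho^{-1}(\rho(z))\subset\overline\Gamma$, contradicting the finiteness of the fiber of $F$ through generic points (after a dimension count using that $F$ is generically finite). Then $\pi(W)$ has dimension exactly $n-1$, giving the pure $(n-1)$-dimensionality.

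For the degree estimate in the case $n=m$, I would compare two counts of a generic fiber $F^{-1}(y)$ in the compactification. On the one hand, its cardinality inside $X$ is $\mu(F)$. On the other hand, interpreting $F^{-1}(y)$ in $\overline X$ as the intersection of $\overline X$ with the $n$ hypersurfaces $\{F_i=y_i\}$ (taken projectively), B\'ezout's theorem in $\PP^k$ bounds its total length by $D\prod_{i=1}^n\deg F_i$. The difference $D\prod\deg F_i-\mu(F)$ therefore counts, with multiplicity, the points of $\overline\Gamma$ over $y$ that lie above $X_\infty$. Intersecting with a generic affine line in $\C^m$ transverse to $S_F$, each such limit point above a point of $S_F$ contributes a factor of at least $\min_i\deg F_i$ (coming from the vanishing order of the smallest-degree $F_i$ at infinity on a generic arc tending to an infinite point). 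Dividing gives precisely
\[
\deg S_F\;\le\;\frac{D\bigl(\prod_{i=1}^n\deg F_i\bigr)-\mu(F)}{\min_{1\le i\le n}\deg F_i},
\]
which is the desired inequality. The delicate point I anticipate here is to justify the vanishing-order bound $\min_i\deg F_i$ uniformly along arcs at infinity, which I would handle by valuative arguments on the curves in $\overline\Gamma$ used in the characterization of $S_F$.
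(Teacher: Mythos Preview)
First, note that the paper does \emph{not} prove this theorem: it is quoted from Jelonek's earlier work (the references \cite{jel}, \cite{jel1}, \cite{jel2}), so there is no proof in the paper to compare against. Your graph--compactification outline is indeed the standard starting point of those proofs, and the identification $S_F=\pi(\rho^{-1}(X_\infty))\cap\C^m$ together with the algebraicity and the upper bound $\dim S_F\le n-1$ are fine.

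There is, however, a genuine gap in your argument for the \emph{lower} bound on the dimension of each component of $S_F$. You claim that a positive-dimensional fiber of $\pi|_W$ would produce a projective curve inside a single fiber $\rho^{-1}(\rho(z))$, and then invoke generic finiteness of $F$. But this is a non-sequitur: since $\overline\Gamma\subset\overline X\times\PP^m$, the restriction of $\rho$ to $\pi^{-1}(y)\subset\overline X\times\{y\}$ is \emph{injective}, so a positive-dimensional fiber of $\pi|_W$ maps isomorphically onto a complete curve in $X_\infty$, not into a fiber of $\rho$. Nothing you have said rules this out, and generic finiteness of $F$ on $X$ says nothing about the behaviour over $X_\infty$. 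In Jelonek's actual proofs the pure $(n-1)$-dimensionality is obtained differently: one passes to a normalization (or uses Stein factorization) so that the projection to the target becomes \emph{finite}, and then $S_F$ is identified with the image of a genuine Cartier divisor under a finite map, which forces each component to have dimension exactly $n-1$. Your compactified graph $\overline\Gamma$ need not be normal, and $\pi$ need not be finite, so this step cannot be skipped.

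The degree-bound sketch has a similar issue. The B\'ezout count $D\prod_i\deg F_i$ for the intersection in $\overline X$ is fine, but your assertion that ``each limit point above a point of $S_F$ contributes a factor of at least $\min_i\deg F_i$'' is exactly the heart of the matter and is not justified by a vague valuative remark. In the cited papers this is handled by an explicit algebraic argument (essentially computing the equation of $S_F$ as a factor of a resultant/characteristic polynomial and reading off degrees), not by an arc-by-arc vanishing-order estimate. As written, your outline is a plausible heuristic for the inequality but not a proof.
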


In the case of a polynomial map of normal affine varieties it  is
easy to  show  the following.

\begin{pr}
Let $f: X \rightarrow Y$ be a dominant polynomial  map of normal
affine varieties. Then $f$ is proper  at  $y \in Y$   if   and
only    if $f^{-1}  ({y})  = \{ x_1,\ldots, x_r \}$ is a finite
set  and $\sum_{i=1}^{r} {\rm mult}_{x_i}  (f) = \mu (f)$.
\end{pr}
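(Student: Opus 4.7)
The plan is to prove the two implications separately; the structural fact behind both is that for a finite morphism between normal affine varieties, the function $y' \mapsto \sum_{x' \in f^{-1}(y')} {\rm mult}_{x'}(f)$ is locally constant and equal to the generic degree $\mu(f)$.

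For the direction $(\Rightarrow)$, suppose $f$ is proper at $y$, so there is a neighborhood $U$ of $y$ with $f^{-1}(\overline{U})$ compact. Since $X$ is affine, a compact algebraic subset is necessarily finite, so in particular $f^{-1}(y) = \{x_1,\ldots,x_r\}$ is finite and the restriction $f\colon f^{-1}(U)\to U$ is a finite morphism. With $Y$ normal and the source a finite reduced scheme over a normal base, the sum $\sum_{x\in f^{-1}(y')}{\rm mult}_x(f)$ is constant on $U$; its generic value is $\mu(f)$, hence $\sum_{i=1}^r{\rm mult}_{x_i}(f)=\mu(f)$.

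For the direction $(\Leftarrow)$, assume $f^{-1}(y)=\{x_1,\ldots,x_r\}$ is finite and $\sum {\rm mult}_{x_i}(f)=\mu(f)$, and suppose for contradiction that $f$ is not proper at $y$. Then there exists a sequence $z_\nu\to\infty$ in $X$ with $f(z_\nu)\to y$. Since each $x_i$ is an isolated point of its fiber and $X$ is normal, Zariski's Main Theorem provides pairwise disjoint analytic neighborhoods $W_i\ni x_i$ and a common neighborhood $U$ of $y$ so that $f\colon \overline{W_i}\cap f^{-1}(U)\to U$ is finite. The local-covering interpretation of multiplicity then gives $|f^{-1}(y')\cap W_i|\le {\rm mult}_{x_i}(f)$ for every $y'\in U$. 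Summing yields $|f^{-1}(y')\cap\bigcup_i W_i|\le\mu(f)$. Choosing $y'$ to be a regular value of $f$ near $y$, we have $|f^{-1}(y')|=\mu(f)$, so \emph{all} preimages of $y'$ must lie in $\bigcup_i W_i$. Applying this to $y'=f(z_\nu)$ for $\nu$ large forces $z_\nu\in\bigcup_i W_i$, which contradicts $z_\nu\to\infty$.

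The main obstacle is making precise the local upper bound $|f^{-1}(y')\cap W_i|\le {\rm mult}_{x_i}(f)$. It requires reducing, via ZMT, to a genuinely finite local model $f\colon W_i\to f(W_i)$ between normal varieties, and then invoking the equality ${\rm mult}_{x_i}(f)=\dim_{\mathbb{C}}\mathcal{O}_{X,x_i}/\mathfrak{m}_y\mathcal{O}_{X,x_i}$ together with the fact that, for such a local finite map, the number of sheets over a nearby point is at most this colength. Patching the local neighborhoods $W_i$ into one common base neighborhood $U$ so that the counting argument works simultaneously at all $x_i$ is the only other delicate point; once the local finiteness and the multiplicity bound are in place, both implications are essentially formal.
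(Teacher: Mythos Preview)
Your $(\Leftarrow)$ argument has a gap at the final step. You correctly show that for every \emph{regular} value $y'\in U$ one has $f^{-1}(y')\subset\bigcup_iW_i$, but then write ``Applying this to $y'=f(z_\nu)$''. Nothing ensures that $f(z_\nu)$ is regular; the sequence $z_\nu$ comes only from non-properness and may lie entirely over the discriminant of $f$, so as written the contradiction does not follow. The repair is short but missing: since $X$ is irreducible and $f$ dominant, the preimage of the discriminant is a proper closed subvariety, hence $f^{-1}(U_{\mathrm{reg}})$ is dense in the open set $f^{-1}(U)$; combined with your inclusion $f^{-1}(U_{\mathrm{reg}})\subset\bigcup_iW_i$ and relative compactness of the $W_i$, this yields $f^{-1}(U)\subset\overline{\bigcup_iW_i}$, which is compact, so $f$ is proper at $y$. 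Equivalently, one may perturb each $z_\nu$ to a nearby $z'_\nu$ with regular image before running the counting argument.

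For comparison, the paper proves $(\Leftarrow)$ (in fact a slightly stronger statement, Proposition~\ref{lemat}) by a global construction rather than local analytic covers: it passes to the normalization $V$ of the closure of the graph of $f$ in $\overline X\times Y$, applies Stein factorization $\overline f=h\circ g$ with $h$ finite and $g$ birational with connected fibers, and uses Zariski's Main Theorem to see that $g$ is a local biholomorphism at each $x_i$. The multiplicities then transfer to the finite map $h$, so $\sum_i{\rm mult}_{g(x_i)}(h)=\mu(h)=\mu(f)$ forces $h^{-1}(y)=\{g(x_1),\dots,g(x_r)\}$ and hence $\overline f^{-1}(y)=\{x_1,\dots,x_r\}$, giving properness directly. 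Your local-analytic route avoids the compactification and Stein machinery at the cost of the density step above. The paper does not prove $(\Rightarrow)$ at all (``it is easy to show''); your sketch of that direction is acceptable, though ``finite morphism'' should be read as ``finite analytic map'', since $f^{-1}(U)\to U$ is not a morphism of affine varieties.
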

Actually  we prove a stronger result.

\begin{pr}\label{lemat}
Let $f: X \rightarrow Y$ be a dominant polynomial  map of normal
affine varieties. Assume that points $x_1,\dots,x_r$ are isolated
components of the fiber $f^{-1}  ({y}).$ If  $\sum_{i=1}^{r} {\rm
mult}_{x_i}  (f) = \mu (f)$, then $f$ is proper at $y$ and $f^{-1}
({y})=\{x_1,\dots,x_r\}.$
\end{pr}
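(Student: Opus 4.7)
The plan is to first establish that $f$ is proper at $y$, and then derive the fiber description from the preceding proposition.

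I would begin with a local finite factorization. Since each $x_i$ is an isolated point of $f^{-1}(y)$, the standard local theory of finite holomorphic maps of normal analytic spaces provides pairwise disjoint relatively compact open neighborhoods $U_i \ni x_i$ and a common open neighborhood $V \ni y$ such that each restriction $f|_{U_i} \colon U_i \to V$ is proper, finite, and surjective of degree $\mathrm{mult}_{x_i}(f)$. Setting $U = \bigsqcup_{i=1}^r U_i$, the restriction $f|_U \colon U \to V$ is then proper, finite, and surjective, of degree $\sum_{i=1}^r \mathrm{mult}_{x_i}(f) = \mu(f)$ by hypothesis.

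A counting argument then supplies the key geometric fact. Since the generic degree of $f \colon X \to Y$ equals $\mu(f)$, matching the degree of the local piece $f|_U$, for every $y'$ in a Zariski-dense open subset $V' \subset V$ we have $\#f^{-1}(y') = \mu(f) = \#(f|_U)^{-1}(y')$, and therefore $f^{-1}(y') \subset U$. To upgrade this to properness at $y$, suppose for contradiction that $f$ is not proper at $y$; then there is a sequence $z_n \in X$ with $\|z_n\| \to \infty$ and $f(z_n) \to y$. Using analytic openness of $f$ at points of $X$ with zero-dimensional fibers (Remmert's open mapping theorem, valid outside a proper Zariski-closed locus in $X$), one perturbs each $z_n$ to a nearby $z_n' \in X$ with $\|z_n'\| \to \infty$ and $f(z_n') \in V'$. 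Then $z_n' \in U$, contradicting the boundedness of $U$. Hence $f$ is proper at $y$. The preceding proposition now yields $\sum_{x \in f^{-1}(y)} \mathrm{mult}_x(f) = \mu(f)$, and since $\{x_1,\dots,x_r\} \subset f^{-1}(y)$ already contributes the total $\mu(f)$, no further point can appear; therefore $f^{-1}(y) = \{x_1,\dots,x_r\}$.

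The main obstacle is the perturbation step: one must secure enough analytic openness of $f$ to perturb an escaping sequence $z_n$ while preserving both its divergence to infinity and the placement of its image in the generic open set $V'$. This is controlled by normality of $X$ (singularities sit in codimension at least two) combined with Remmert's theorem at points with isolated fibers. An alternative route would invoke Theorem \ref{setSF}: if $f$ were not proper at $y$, then $S_f$ would be pure of codimension one near $y$, producing non-proper values arbitrarily close to $y$ that could be shown to violate the local degree equality for $f|_U$.
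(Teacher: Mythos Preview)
Your argument is essentially correct but follows a different route from the paper. The paper compactifies: it passes to a proper extension $\overline{f}\colon V\to Y$ (normalizing the closure of the graph), applies Stein factorization $\overline{f}=h\circ g$ with $h$ finite and $g$ birational with connected fibers, and uses the Zariski Main Theorem to identify $\mathrm{mult}_{x_i}(f)=\mathrm{mult}_{g(x_i)}(h)$. Since $h$ is finite of degree $\mu(f)$ and the $g(x_i)$ already account for the full degree over $y$, one gets $h^{-1}(y)=\{g(x_1),\dots,g(x_r)\}$, hence $\overline{f}^{-1}(y)=\{x_1,\dots,x_r\}\subset X$, and properness at $y$ follows immediately. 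No sequences, no perturbation.

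Your approach instead builds the local branched cover $f|_U\colon U\to V$ of degree $\mu(f)$, observes that generic fibers over $V$ are therefore entirely contained in $U$, and derives a contradiction from a hypothetical escaping sequence by perturbing into the generic locus. This is a perfectly valid analytic strategy; the payoff is that it avoids Stein factorization and ZMT in favor of elementary local cover theory. The cost is the perturbation step, which you correctly flag as the crux. Your justification via Remmert openness is slightly off target: the escaping points $z_n$ could all lie in the positive-fiber-dimension locus where Remmert does not apply directly. The clean fix is simpler than what you wrote: since $f$ is dominant and $X$ is irreducible, $f^{-1}(Y\setminus V')$ is a proper Zariski-closed subset of $X$, hence its complement is Euclidean-dense; pick $z_n'$ in this complement within distance $1/n$ of $z_n$, and continuity of $f$ forces $f(z_n')\in V$ for large $n$, hence $f(z_n')\in V'$. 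No openness of $f$ is needed, only density of the generic locus.
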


\begin{proof}
Let $\overline{X}$ be the projective completion of $X.$ Let $V$ be  the normalization of the closure of  the graph $f$ in $\overline{X}\times Y$.
Then $X\subset V$ and there is a proper mapping $\overline{f}: V\to Y$ such that $\overline{f}_{|X}=f.$
By   the Stein Factorization Theorem (see \cite{iit}, pp. 141-142), there exists a normal variety $W$, a finite morphism
$h : W\rightarrow Y$, and a surjective morphism $g: V\rightarrow W$
with connected fibers, such that $\overline{f}=h\circ g.$ In particular the mapping $g$ is birational.
Let $w_i=g(x_i).$ By  the Zariski Main Theorem the mapping $g$ is a local biholomorphism near $x_i$. This means
that ${\rm mult}_{x_i} \overline{f}={\rm mult}_{x_i} f= {\rm mult}_{w_i} h.$ Since the mapping $h$ is finite and $\mu(h)=\mu(f)$, we have 
$h^{-1}(y)=\{ w_1,\dots, w_r\}.$  Consequently, $\overline{f}^{-1}(y)=\{ x_1,\dots, x_r\},$ which implies that $f$ is proper at $y.$
\end{proof}
The next fact will be useful.

\begin{pr}\label{proper}
Let $X$ be a normal affine variety of dimension $n$ and let $f : X\to\C^n$ be a dominant mapping.
Let $b\in \C^n$. If there exists a relatively compact set  $K\subset X$  and  a sequence of points $b_i\in\C^n$ such that

1)  $f^{-1}(b)\subset K$ and $f^{-1}(b_i)\subset K$ for $i \in\N $,

2) $\# f^{-1}(b_i)=\mu(f)$,

3) $\lim_{i\to\infty} b_i=b$,

\noindent then $f$ is proper at $b$.
\end{pr}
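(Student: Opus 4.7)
The plan is to reduce to Proposition \ref{lemat}: it suffices to show that $f^{-1}(b)=\{y_1,\ldots,y_r\}$ is a finite set of isolated points with $\sum_{k=1}^r {\rm mult}_{y_k}(f)=\mu(f)$. Finiteness is immediate from condition 1): $f^{-1}(b)$ is Zariski-closed in $X\subset\C^k$ and contained in the bounded set $\overline{K}$, and an irreducible affine variety of positive dimension has unbounded $\C$-points, so every irreducible component of $f^{-1}(b)$ is zero-dimensional.

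Next I would pick pairwise disjoint open neighborhoods $U_k\subset X$ of the points $y_k$, with $\overline{U}_k$ compact and $U_k\cap f^{-1}(b)=\{y_k\}$, chosen small enough that
$$\#(f^{-1}(b')\cap U_k)\le {\rm mult}_{y_k}(f)$$
for every $b'$ in some neighborhood $B$ of $b$. This is the step I expect to be the main obstacle, and the natural way to handle it is to reuse the Stein factorization $\overline{f}=h\circ g$ from the proof of Proposition \ref{lemat}: each $y_k$ is isolated in $\overline{f}^{-1}(b)$ (its isolation in $f^{-1}(b)$ transfers because $X$ is open in $V$), so the Zariski Main Theorem makes $g$ a local biholomorphism near $y_k$, the images $w_k=g(y_k)\in h^{-1}(b)$ are pairwise distinct, and ${\rm mult}_{y_k}(f)={\rm mult}_{w_k}(h)$. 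The displayed inequality then reduces to the usual local estimate for the finite morphism $h$ near $w_k$.

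A standard compactness argument shows that $f^{-1}(b_i)\subset\bigcup_k U_k$ for all large $i$: if not, by extracting a subsequence, a sequence of points lying in $f^{-1}(b_i)\cap(\overline{K}\setminus\bigcup_k U_k)$---a compact set, by 1)---would have a limit $x^*\in\overline{K}\setminus\bigcup_k U_k$, and by continuity $f(x^*)=b$, contradicting $f^{-1}(b)\subset\bigcup_k U_k$. Combining this with 2), for $i$ large,
$$\mu(f)=\#f^{-1}(b_i)=\sum_{k=1}^r\#\bigl(f^{-1}(b_i)\cap U_k\bigr)\le\sum_{k=1}^r{\rm mult}_{y_k}(f)\le\mu(f),$$
the last inequality following once more from Stein factorization (the $w_k$ are distinct points of $h^{-1}(b)$ and $\deg h=\mu(f)$). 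Equality throughout gives $\sum_{k=1}^r{\rm mult}_{y_k}(f)=\mu(f)$, and Proposition \ref{lemat} concludes that $f$ is proper at $b$.
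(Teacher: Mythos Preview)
Your argument is correct and, like the paper, finishes by invoking Proposition~\ref{lemat}; the difference lies in how you verify its hypothesis $\sum_k {\rm mult}_{y_k}(f)=\mu(f)$. The paper first shows, by a compactness contradiction very close in spirit to your Step~3, that for some small ball $U$ around $b$ the restriction $f:K\cap f^{-1}(U)\to U$ is proper, and then quotes the analytic-cover theorem (Gunning--Rossi) to conclude that this map is a $\mu(f)$-sheeted branched cover, so the multiplicity sum over any fiber in $U$ equals $\mu(f)$. You bypass the analytic-cover theorem entirely: you reuse the Stein factorization $\overline f=h\circ g$ from the proof of Proposition~\ref{lemat} to get the local upper bound $\#(f^{-1}(b')\cap U_k)\le {\rm mult}_{y_k}(f)$ and the global upper bound $\sum_k{\rm mult}_{y_k}(f)\le\mu(f)$, trap the generic fibers $f^{-1}(b_i)$ in $\bigcup_k U_k$ by compactness, and squeeze. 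Your route stays within the machinery already deployed in the paper and avoids the external reference; the paper's route is quicker once the analytic-cover theorem is granted as a black box.
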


\begin{proof}
Since $X$ is locally compact we can assume that the set $K$ is open. Let $U_n\subset Y$ be a ball of radius $1/n$ with center at $b.$
Assume that for any  $n$ the mapping $f: K\cap f^{-1}(U_n)\to U_n$ is not proper. Since the mapping $f: \overline{K}\cap f^{-1}(U_n)\to U_n$
is proper, we have $\overline{K}\cap f^{-1}(U_n)\not={K}\cap f^{-1}(U_n)$. Take $x_n\in \overline{K}\cap f^{-1}(U_n)\setminus {K}\cap f^{-1}(U_n)$.
Since $\overline{K}$ is compact, the sequence $x_n$ has an accumulation point $x\in \overline{K}\setminus K.$ But $f(x)=y$, which contradicts the fact that
$f^{-1}(y)\subset K.$

Consequently, the mapping  $f: K\cap f^{-1}(U_n)\to U_n$ is  proper for some $n.$ Thus it is an analytic cover (see Theorem 21, p. 108 in \cite{gun-ros}).
Moreover, it is easily  seen that this cover is $\mu(f)$-sheeted. In particular for any $y\in U_n$ we have $\sum_{f(x)=y, \ x\in S} {\rm mult}_{x}  (f) =
\mu (f)$. To conclude we apply   Proposition \ref{lemat}.
\end{proof}

\section{Parametrizing branches at infinity }

We begin  with a variant of  the Puiseux Theorem.

\begin{lem}\label{puisex}
Let $C\subset \C^n$ be an algebraic  curve of degree $d$. Assume that
$a\in (\Bbb P^n\setminus \C^n)\cap \overline{C}$ is a point at
infinity of $C$. Let $\Gamma$ be an irreducible  component of the
germ ${\ \overline{C}_a}$. Then there exist an integer $s\le
d$ and a real number $R>0$ such that  $\Gamma$ has a
holomorphic parametrization of the type
$$x=\sum_{-\infty\le i\le s} a_it^i,\,\, |t|>R,$$ where $  t\in \C,\, a_i\in
\C^n$ and $\sum_{i>0} |a_i|>0.$
\end{lem}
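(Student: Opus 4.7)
The plan is to pass to the normalization of the projective closure $\overline{C}\subset\PP^n$, where each branch of $\overline{C}$ at $a$ becomes a smooth point, and then to bound the pole order of the affine coordinates at that point by an intersection-theoretic argument.

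First, I would take the projective closure $\overline{C}\subset\PP^n$ and its normalization $\pi:\tilde C\to\overline{C}$. Since $\tilde C$ is a smooth projective algebraic curve and $\pi$ is finite and birational, the points of $\pi^{-1}(a)$ are in bijective correspondence with the analytic branches of $\overline{C}$ at $a$; let $\tilde a\in\tilde C$ be the preimage corresponding to the chosen branch $\Gamma$. Pick a local uniformizer $u$ at $\tilde a$, so that $u$ is biholomorphic from some disc $\{|u|<r\}$ onto a neighborhood of $\tilde a$ in $\tilde C$, with $u(\tilde a)=0$.

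Next, I would analyze the affine coordinate functions on $\tilde C$. Writing $x_i=X_i/X_0$, where $H_\infty=\{X_0=0\}$ is the hyperplane at infinity, each $x_i$ becomes a meromorphic function on $\tilde C$ whose pole divisor is supported on $\pi^{-1}(\overline{C}\cap H_\infty)$. The zero divisor of the linear form $X_0$ restricted to $\overline{C}$ has total degree equal to $\deg\overline{C}=d$ by the definition of the degree of a projective variety, so its pullback to $\tilde C$ also has degree $d$. Consequently the pole divisor of $x_i$ on $\tilde C$ has degree at most $d$, and in particular
$$m_i:=\operatorname{ord}^{-}_{\tilde a}(x_i)\le d.$$
Setting $s:=\max_{1\le i\le n} m_i$, we get $s\le d$.

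For each $i$, expanding $x_i$ as a Laurent series in the uniformizer,
$$x_i(u)=\sum_{j\ge -m_i} b_{ij}\,u^{j},$$
and substituting $t=1/u$ produces a convergent expansion $x_i(t)=\sum_{k\le s} a_{ik}\,t^{k}$, valid on $\{|t|>R\}$ with $R=1/r$. Assembling the coordinates gives the desired parametrization with coefficient vectors $a_k=(a_{1k},\dots,a_{nk})\in\C^n$. Finally, because $\tilde a$ lies over $a\in H_\infty$, the function $X_0/X_i$ vanishes at $\tilde a$ for some $i$, equivalently $x_i$ has a genuine pole at $\tilde a$; this gives an index $k>0$ with $a_{ik}\ne 0$, whence $\sum_{i>0}|a_i|>0$.

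The only non-routine point, and the one I would write out carefully, is the degree bound $s\le d$; all the rest is formal once one is on the smooth curve $\tilde C$. This bound is a direct consequence of the fact that a hyperplane section of a degree-$d$ projective curve has total degree $d$, so a single branch at infinity contributes at most $d$ to the intersection with $H_\infty$, and this intersection multiplicity equals the pole order of an affine coordinate along that branch.
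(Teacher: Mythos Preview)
Your proof is correct. Both your argument and the paper's hinge on the same geometric fact---that a single branch of $\overline{C}$ at $a$ meets the hyperplane at infinity with multiplicity at most $d=\deg C$---but the two proofs package it differently. The paper works by hand: it chooses an affine chart at $a$ so that the projection $y_1=x_0/x_1$ is finite on $\overline{C}$, observes that the restriction of this projection to the branch $\Gamma$ is an $s$-sheeted covering of a punctured disc with $s\le d$, precomposes with $z\mapsto z^s$ to make the inverse single-valued, and then invokes Riemann's removable singularities theorem to extend across $0$. You instead pass to the normalization $\tilde C$, pick a uniformizer at the preimage $\tilde a$ of the branch, and bound the pole order of each affine coordinate by the degree of the pullback of $\mathcal{O}(1)$. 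Your argument is cleaner and more conceptual; the paper's is more explicit about the analytic construction of the parametrization, which has the advantage that in the subsequent real version (Lemma~\ref{puisexreal}) one can follow the action of complex conjugation through each step and see directly that the resulting power series has real coefficients. If you wanted to adapt your proof to the real case, you would need to argue that the uniformizer at $\tilde a$ can be chosen real, which is true but requires an extra word about the conjugation action on $\tilde C$.
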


Note that  $\overline{C}$,  the closure  of $C$ in $\PP^n$, is the same for the strong topology and the Zariski topology, since we work over $\C$. So in fact  
$\overline{C}$ is an algebraic set in $\PP^n$.
\begin{proof}
First choose an
affine system of coordinates in $\C^n$ in a generic way.  Let
$a=(0:a_1:\dots :a_n)\in \Bbb P^n.$ We can assume that
$a_1\not=0$.
Now choose 
coordinates in the affine chart $U_1= \Bbb P^n\setminus H$, where
$H=\{ x : x_1=0\}.$ Take $y_1=x_0/x_1$ and $y_i=x_i/x_1$ for
$i=2,...,n.$ Put $L=\{ x\in H: x_0=0 \}.$ By our assumption we
have $L\cap \overline{C}=\emptyset.$ In particular the projection
$\pi_L: \overline{C}\setminus H\ni (y_1,\dots ,y_n)\mapsto y_1\in \C$
is finite.  So there is a punctured disc $U_\delta^*=\{ z\in \C
: 0<|z|< \delta\}$ such that the mapping $$\rho :\Gamma \cap
\pi_L^{-1}(U_\delta^*)\ni x\mapsto \pi_L(x)\in U_\delta^*$$ is proper. We can also
assume that the set $\Gamma^\prime:=\Gamma \cap \pi_L^{-1}(U_\delta^*)$ is connected 
smooth and $\rho$ has no critical values on $U_\delta^*$. In particular
$\rho$ is a holomorphic covering of degree $s\le d.$

In particular $\rho^{-1} : U_\delta^*\ni z\mapsto (z,
h_2(z),\dots ,h_n(z))\in \Gamma^\prime$ is an $s$-valued holomorphic
function. If we compose it with  $z\mapsto z^s$ we obtain a
holomorphic bounded  function on $ U_{\delta^{1/s}}^*$. 
By Riemann's theorem on removable singularities this  function  extends to
a holomorphic function on the disc $U_{\delta^{1/s}}= \{ z\in \C
: \, |z|< \delta^{1/s}\}$. 
{
 Consequently, the mapping $$z\mapsto (z^s,
h_2(z^s),\dots,h_n(z^s))=(z^s, g_2(z),\dots, g_n(z))$$
is holomorphic in  $U_{\delta^{1/s}}$.
(Precisely we have $s$ such functions which are  of the form
$(z^s, g_2(\xi z),\dots, g_n(\xi z))$, where $\xi^s=1$.)
Hence in the original coordinates  we have
$$z\mapsto (1/z^{s},
g_2(z)/z^s,\dots ,g_n(z)/z^s).$$
Now put $z=t^{-1}$, where $|t|> R:=1/\delta.$
So
$$t\mapsto (t^{s},
t^sg_2(t^{-1}),\dots,t^sg_n(t^{-1}))$$
is the desired parametrization of $\Gamma$.
}
\end{proof}

\vskip 1cm


To state the real version let us recall some well known facts (see  e.g., \cite{kurdyka1}, Preliminaries).
Consider the  standard  embedding of  $\R^n$ in its projective closure denoted by $\PP^n(\R)$.
Let $C\subset \R^n$ be  an algebraic curve, denote by ${\overline C}^Z$ the  Zariski  closure  of $C$   in  $\PP^n(\R)$ and 
by ${\overline C} $ the closure  of $C$ in  $\PP(\R^n)$ for the strong topology. 
Clearly   ${\overline C \subset \overline C}^Z$ and $ \overline{ C}^Z\setminus \overline  C$ is a finite (possibly empty) set.
Hence, if  $a\in (\Bbb P^n(\R)\setminus \R^n)\cap \overline{C}$,  then we have the equality of germs ${\overline C_a= \overline C}^Z_a$.
Finally, recall that  $\Gamma$ is an irreducible component of the germ  of a real algebraic  curve $A$ at $a\in \PP^n$, which is a non-isolated  point of $A$, if and only if 
  there exists a real analytic injective parametrization $\gamma: (-\delta, \delta) \to A\subset  \PP^n(\R)$ of $\Gamma$.
  By the degree of a real algebraic curve we mean the degree of its complexification.
  

\begin{lem}\label{puisexreal}
Let $C\subset \R^n$ be a real algebraic  curve of degree $d$. Assume that
$a\in (\Bbb P^n(\R)\setminus \R^n)\cap \overline{C}$ is a point at
infinity of $C$. Let $\Gamma$ be an irreducible  component of the
germ ${\ \overline{C}_a}$. Then there is an integer $s\le
d$ and a real number $R>0$ such that  $\Gamma$ has a
real analytic  parametrization of the type
$$x=\sum_{-\infty\le i\le s} a_it^i,\,\, |t|>R,$$ where $  t\in \R,\, a_i\in
\R^n$ and $\sum_{i>0} |a_i|>0.$
\end{lem}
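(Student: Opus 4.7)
The plan is to obtain the real parametrization from the complex one by complexifying $C$, invoking Lemma \ref{puisex} for the complex branch, and then using invariance under complex conjugation to rescale the parameter $t$ so that all coefficients become real.

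First, let $C_\C \subset \C^n$ denote the complexification of $C$; it is a complex algebraic curve of the same degree $d$, and $a \in \PP^n(\R) \subset \PP^n(\C)$ is a point at infinity of $C_\C$. By the discussion preceding the lemma, $\Gamma$ admits a real analytic injective parametrization $\gamma:(-\delta,\delta) \to \PP^n(\R)$; complexifying $\gamma$ yields an injective holomorphic map of a complex disc into $\PP^n(\C)$ whose image defines an irreducible complex analytic germ $\Gamma_\C$ of $\overline{C_\C}$ at $a$. Because $\gamma$ is real-valued, $\Gamma_\C$ is invariant under the complex conjugation $\sigma:\PP^n(\C) \to \PP^n(\C)$ fixing $\PP^n(\R)$ pointwise.

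Next, I would apply Lemma \ref{puisex} to $C_\C$ and $\Gamma_\C$ to obtain an integer $s \le d$, a real $R>0$, and a holomorphic parametrization $\phi(t)=\sum_{i\le s} a_i t^i$ of $\Gamma_\C$ on $\{|t|>R\}$, with $a_i \in \C^n$ and $a_s \neq 0$. The invariance $\sigma(\Gamma_\C)=\Gamma_\C$ implies that $\tilde\phi(t):=\sigma(\phi(\bar t))=\sum_i \overline{a_i}\,t^i$ is a second holomorphic parametrization of the same germ. The proof of Lemma \ref{puisex} makes explicit that any two such parametrizations differ by a change of variable $t \mapsto \zeta t$ with $\zeta^s=1$ (these are the deck transformations of the degree $s$ covering $\rho$ built in that proof). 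Hence there exists $\xi \in \C$ with $\xi^s=1$ such that $\tilde\phi(t)=\phi(\xi t)$, equivalently $\overline{a_i}=\xi^i a_i$ for every $i$.

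Finally, I would choose $\eta \in \C$ with $|\eta|=1$ and $\eta^2 = \xi$ (possible since $|\xi|=1$) and set $\phi'(t):=\phi(\eta t)=\sum_i (a_i\eta^i)t^i$. Since $|\eta|=1$, the domain $\{|t|>R\}$ is unchanged, and $\overline{a_i\eta^i}=\overline{a_i}\,\bar\eta^i=\xi^i \bar\eta^i a_i=(\xi\bar\eta)^i a_i=\eta^i a_i$, using $\bar\eta=\eta^{-1}$ and $\eta^2=\xi$; thus $a_i\eta^i \in \R^n$. Restricting $\phi'$ to real $t$ with $|t|>R$ produces the desired real analytic parametrization of $\Gamma$.

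The main obstacle, beyond bookkeeping, is the appeal in the third paragraph to the uniqueness of the Puiseux-type parametrization up to the Galois action of the $s$-th roots of unity; this must be traced carefully through the proof of Lemma \ref{puisex}. A secondary point to verify is that the restriction of $\phi'$ to real $t$ parametrizes exactly the real branch $\Gamma$ and not some larger real locus, which follows from the fact that $\Gamma_\C$ is by construction the complexification of $\Gamma$, and the real trace near $a$ of an irreducible complex germ that admits a real analytic parametrization coincides with the image of that parametrization.
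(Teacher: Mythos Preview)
Your argument is correct and follows the same overall strategy as the paper: complexify the curve, identify the complex branch $\Gamma_\C$ containing $\Gamma$, observe that it is $\sigma$-invariant, and then exploit that invariance together with the explicit Puiseux construction of Lemma~\ref{puisex}. The one point you should make explicit (the paper does) is that the affine coordinates in the proof of Lemma~\ref{puisex} can be chosen \emph{real}; this is what guarantees that $\tilde\phi(t)=\sigma(\phi(\bar t))$ again has first coordinate $t^s$, and hence really is one of the $s$ deck-translates $\phi(\zeta t)$, $\zeta^s=1$.

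The only genuine difference is in the last step. The paper argues as follows: the $s$ single-valued lifts $z\mapsto(z^s,g(\xi z))$, $\xi^s=1$, have disjoint graphs permuted by $\sigma$; since one of these graphs contains the real points coming from $\Gamma$, that particular graph is $\sigma$-stable, and therefore the corresponding $g$ satisfies $g(\bar z)=\overline{g(z)}$, i.e.\ has real Taylor coefficients. You instead start from any lift $\phi$, use $\sigma$-invariance to obtain the cocycle relation $\overline{a_i}=\xi^i a_i$ with $\xi^s=1$, and then kill the cocycle by the substitution $t\mapsto\eta t$ with $\eta^2=\xi$. Your route is a touch more algebraic and avoids singling out the branch via real points; the paper's route is more geometric and makes the identification $\phi'(\R)=\Gamma$ immediate, whereas you have to argue it separately (your last paragraph). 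Both are short and equally valid.
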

\begin{proof} We shall explain how to adapt  the proof of Lemma \ref{puisex} to the real case. Denote by $C^\prime \subset \PP^n$ the complexification of the real  curve 
$\overline C^Z$.
Hence the complex conjugation $\sigma$ acts as an involution on $C^\prime $, moreover $\overline C^Z$ is the set of fixed points of $\sigma$. 
Note that in the proof of  Lemma \ref{puisex} we can choose real coordinates, so the action of $\sigma$ will be preserved. 
We shall use  the action of $\sigma$ on the germ $C^\prime _a$.  
By the assumption there exists a branch of $C^\prime _a$ which contains the real branch $\Gamma $; we call this branch $\Gamma^\prime $. 
So $\sigma (\Gamma^\prime ) = \Gamma^\prime $, which means that the graph of the multivalued function 
$h= (h_2,\dots,h_n)$ is invariant under the action of $\sigma$. The set $\Gamma_s : = \{(z^s,h(z^s)):\, z \in U^*_{\delta^{1/s}}\}$ is a disjoint union 
of graphs of $s$ holomorphic functions  in the  punctured  disc  $ U_{\delta^{1/s}}^*$.

Note that $\sigma (\Gamma_s ) = \Gamma_s $. Since one of those graphs contains real points (coming from the real branch $\Gamma$), this graph is stable under $\sigma$.  We call this function $g=(g_2,\dots,g_n)$.
Therefore
$g_j(\bar z) =\overline {g_j(z)}$ for $j=2,\dots,n$, 
which means  that all coefficients  in the power series expansion (at $0\in \C$) of  $g_j $ are real. 
Hence the lemma follows.

\end{proof}
The following elementary but useful  lemma can be checked by direct computations.

\begin{lem}\label{truncation}
Assume that a holomorphic  curve has  a parametrization of the type
$$x(t)=\sum_{-\infty\le i\le s} a_it^i,\,\, |t|>R,$$ where $  t\in \C,\, a_i\in \C^n, \sum_{i>0} |a_i|>0$ and $s\ge 0$ is an integer.
Let $f:\C^n \to \C$ be a polynomial of degree $d$. Set
$$\tilde x(t)=\sum_{-(d-1)s\le i\le s} a_it^i,\,\, |t|>R.$$  Assume that $ \lim_{t\to \infty} f( x(t)) = b \in \C$. Then
$$\lim_{t\to \infty} f(\tilde x(t))=\lim_{t\to \infty} f( x(t)).$$
The same statement holds in the real case.
\end{lem}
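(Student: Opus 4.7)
The strategy is to show that the tail discarded from $x(t)$ produces, after applying $f$, an error of size $O(|t|^{-1})$, so that $f(\tilde x(t))$ has the same limit as $f(x(t))$.

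First I will bound the tail $r(t) := x(t) - \tilde x(t) = \sum_{i < -(d-1)s} a_i t^i$. The hypothesis that the Laurent series $\sum_{i \le s} a_i t^i$ converges for $|t| > R$ is equivalent, after the substitution $u = 1/t$, to the convergence of an ordinary power series in a neighbourhood of $0$; Cauchy-type estimates then give $|a_i| \le M R^{-i}$ for some constant $M$. Summing the resulting geometric series yields $|r(t)| \le C|t|^{-(d-1)s - 1}$ for $|t|$ sufficiently large, while plainly $|x(t)|, |\tilde x(t)| = O(|t|^s)$.

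Next I will expand $f(x(t)) - f(\tilde x(t))$ monomial by monomial. For each monomial $x_1^{k_1}\cdots x_n^{k_n}$ of $f$ of total degree $k = k_1+\cdots+k_n \le d$, the difference $x_1(t)^{k_1}\cdots x_n(t)^{k_n} - \tilde x_1(t)^{k_1}\cdots \tilde x_n(t)^{k_n}$ splits as a finite sum of products, each containing at least one factor of the form $r_j(t)$. A product with exactly $m \ge 1$ such factors and $k - m$ factors from $x$ or $\tilde x$ has size
$$O\bigl(|t|^{(k-m)s}\cdot |t|^{-m((d-1)s + 1)}\bigr) = O\bigl(|t|^{(k - md)s - m}\bigr),$$
and this exponent is $\le -1$ since $m \ge 1$, $k \le d$ and $s \ge 0$. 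Summing over the finitely many monomials and splittings gives $f(x(t)) - f(\tilde x(t)) = O(|t|^{-1})$, hence $\lim f(\tilde x(t)) = \lim f(x(t)) = b$. The real case is word-for-word the same since only absolute values enter.

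The only real obstacle is the bookkeeping check that $(d-1)s$ is the correct threshold: a single $r$-factor must offset the $|t|^s$-growth of up to $d-1$ other factors, which is precisely what forces $|r(t)|$ to decay faster than $|t|^{-(d-1)s}$; retaining additional $r$-factors only improves the bound.
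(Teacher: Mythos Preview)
Your proof is correct and carries out exactly the ``direct computation'' that the paper alludes to but does not write out: the paper simply states that the lemma ``can be checked by direct computations'' and gives no further argument. Your monomial-by-monomial expansion with the tail estimate $|r(t)|=O(|t|^{-(d-1)s-1})$ is precisely the natural way to make that computation explicit, and the exponent bookkeeping showing $(k-md)s-m\le -1$ is the essential point.
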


\section{The complex case}
Let us recall  a well-known  fact.

\begin{lem}\label{projekcja}
For sufficiently general numbers $a_{ij}\in \C$ the mapping
$$\pi: X\ni (x_1,\dots,x_m)\mapsto \Big(\sum_{j=1}^m a_{1j}x_j,
\sum_{j=2}^m a_{2j}x_j,\dots,\sum_{j=n}^m a_{nj}x_j\Big)\in \Bbb
C^n$$ is finite.
\end{lem}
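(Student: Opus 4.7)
The plan is to reformulate finiteness of $\pi|_X$ as a condition on the intersection of the projective closure $\overline X\subset\PP^m$ with a linear subspace at infinity, then verify this condition for a generic triangular tuple $(a_{ij})$ by a dimension count on an incidence variety.

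Embed $\C^m\subset\PP^m$ as the chart $\{x_0\ne 0\}$, write $\ell_i=\sum_{j=i}^m a_{ij}x_j$, and extend $\pi$ to the rational map
\[
\overline\pi:\PP^m\to\PP^n,\qquad [x_0:x_1:\cdots:x_m]\mapsto[x_0:\ell_1:\cdots:\ell_n],
\]
whose base locus is $B=\{x_0=\ell_1=\cdots=\ell_n=0\}$. Since $\overline\pi^{-1}(\C^n)=\C^m$, the condition $B\cap\overline X=\emptyset$ forces $\overline\pi|_{\overline X}$ to be a morphism of projective varieties $\overline X\to\PP^n$, hence proper; its restriction to $X$ is then a proper, and therefore finite, morphism $X\to\C^n$. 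It suffices to show $B\cap\overline X=\emptyset$ for generic $(a_{ij})$ in the parameter space $T$ of triangular tuples.

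Set $Y:=\overline X\cap\{x_0=0\}\subset\PP^{m-1}$, so $\dim Y\le n-1$, and consider the incidence variety
\[
I=\bigl\{\bigl((a_{ij}),y\bigr)\in T\times Y : \ell_i(y)=0\text{ for all }i\bigr\}
\]
together with the projection $\rho:I\to T$. Stratify $Y$ by the index $k(y):=\max\{i:y_i\ne 0\}$, giving strata $Y_k\subset\{x_{k+1}=\cdots=x_m=0\}$. On $Y_k$ the nontrivial conditions are those with $i\le\min(k,n)$; each is a hyperplane in a distinct (hence disjoint) block $(a_{ij})_{j\ge i}$ of $T$, so the fiber of $I$ over any $y\in Y_k$ has codimension $\min(k,n)$ in $T$. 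Since $\dim Y_k\le\min(k-1,n-1)$, a short case split on whether $k\ge n$ gives $\dim\bigl(I\cap(T\times Y_k)\bigr)\le\dim T-1$, and hence $\dim I\le\dim T-1$. Because $Y$ is projective, $\rho$ is a closed map, so its image $\Delta\subset T$ is a proper Zariski-closed subset; every $(a_{ij})\in T\setminus\Delta$ yields $B\cap\overline X=\emptyset$ and hence a finite $\pi|_X$.

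The main obstacle I anticipate is the stratified dimension count: one must carefully track the degenerate strata where some coordinates of $y$ vanish and the corresponding constraints $\ell_i(y)=0$ become trivial, and check that on each such stratum the drop in $\dim Y_k$ compensates exactly for the loss of codimension in $T$. Note that the criterion $B\cap\overline X=\emptyset$ is sufficient but not necessary for finiteness of $\pi|_X$; however, since it already carves out a non-empty Zariski-open subset of $T$, this is all that is required.
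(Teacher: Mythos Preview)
Your argument is correct. The paper itself gives no proof of this lemma; it is introduced with the phrase ``Let us recall a well-known fact'' and left unproved, being essentially a form of the Noether normalization lemma (with the extra triangular shape of the projection).

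Your route via the projective closure and the base locus of the associated rational map $\overline\pi:\PP^m\dashrightarrow\PP^n$ is the standard one. The stratified dimension count is carried out correctly: on $Y_k$ only the conditions $\ell_i(y)=0$ with $i\le\min(k,n)$ are nontrivial, each is a genuine hyperplane in its own row of $T$ because $y_k\ne0$, and the bound $\dim Y_k\le\min(k-1,n-1)$ exactly compensates. One small point worth making explicit for a reader: the step ``proper, and therefore finite'' uses that a proper morphism between affine varieties is finite (equivalently, the fibers are closed in the affine variety $X$ and complete, hence zero-dimensional). You might also state at the outset that $\dim X=n$, which is implicit in the paper's context and is needed for $\dim Y\le n-1$.
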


We can now state an  effective variant of  the  curve selection lemma.

\begin{theo}\label{luk}
Let $F:\C^n\ni x\mapsto (f_1(x),\dots ,f_m(x))\in \C^m$ be a generically
finite polynomial mapping. Assume that $\deg f_i=d_i$ and $d_1\ge
\cdots \ge d_m.$ Let $b\in \C^m$ be a point at which 
$F$ is not proper. Then there exists a rational curve  with a
parametrization of the form
$$x(t)=\sum_{-(d-1)D-1\le i\le D} a_it^i, \ t\in \C^*,$$ where $a_i\in
\C^n, \sum_{i>0} |a_i|>0$ and $D=\prod^{n}_{i=2} d_i, \ d=d_1,$
such that
$$\lim_{t\to \infty} F(x(t))=b.$$
\end{theo}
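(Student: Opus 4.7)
The strategy is to reduce to a dominant map $G:\C^n\to \C^n$ via the triangular projection of Lemma \ref{projekcja}, then cut out an algebraic curve of degree at most $D$ by imposing $n-1$ equations, extract a branch at infinity via the Puiseux parametrization of Lemma \ref{puisex}, and finally truncate via Lemma \ref{truncation}.

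First I would apply Lemma \ref{projekcja} to choose sufficiently generic $a_{ij}\in \C$ so that the triangular projection $\pi:\C^m\to\C^n$, $\pi_i(y)=\sum_{j=i}^m a_{ij}y_j$, restricts to a finite morphism $\pi|_{\overline{F(\C^n)}}\to\C^n$. Setting $G:=\pi\circ F$, triangularity together with the degree ordering $d_1\geq \cdots\geq d_m$ ensures $\deg G_i=d_i$; also $G$ is dominant, and, via the same sequence $x_\nu\to\infty$ with $F(x_\nu)\to b$, the composition $G$ fails to be proper at $c:=\pi(b)$. Next I would consider
\[
W:=\{x\in\C^n:G_2(x)=c_2,\ldots,G_n(x)=c_n\},
\]
cut out by $n-1$ polynomials of degrees $d_2,\ldots,d_n$. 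By a B\'ezout-type estimate every one-dimensional irreducible component of $W$ has degree at most $D=\prod_{i=2}^n d_i$. The key assertion is then: some such component $C$ has a branch at infinity along which $G\to c$, and hence $F\to b$. The idea is that since $(G_2,\ldots,G_n)(x_\nu)\to(c_2,\ldots,c_n)$, the sequence lies asymptotically close to $W$; a curve-selection argument in $\PP^n$ promotes this to an actual branch of $W$ at infinity realizing $G\to c$. Finiteness of $\pi|_{\overline{F(\C^n)}}$ (via Proposition \ref{proper} applied along the branch) forces $F$ along this branch to stay in a compact set, hence to accumulate in the finite set $\pi^{-1}(c)\cap\overline{F(\C^n)}$; for generic $\pi$ the point $b$ is isolated among these preimages, and choosing the correct component $C$ gives $F\to b$.

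Having $C$, I would apply Lemma \ref{puisex} at the selected point at infinity to write
\[
x(t)=\sum_{-\infty\leq i\leq s} a_i t^i,\quad |t|>R,
\]
with $s\leq \deg C\leq D$. Since $\deg f_j\leq d=d_1$ for every $j$, applying Lemma \ref{truncation} coordinate-wise yields a truncation $\tilde x(t)=\sum_{-(d-1)s\leq i\leq s} a_it^i$ preserving each limit $f_j(\tilde x(t))\to b_j$; because $s\leq D$, padding the missing exponents by zero places $\tilde x$ in the form claimed by the theorem.

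The main obstacle is the middle step: translating the approximate convergence of a sequence near $W$ into an algebraic branch of $W$ at infinity with the correct limit $b$, rather than one of the other points of $\pi^{-1}(c)\cap\overline{F(\C^n)}$. This requires combining a curve-selection argument with the finiteness of $\pi|_{\overline{F(\C^n)}}$ (to rule out $F(x(t))\to\infty$) and the genericity of $\pi$ (to single out the correct sheet through $b$).
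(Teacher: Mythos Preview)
Your architecture matches the paper's: compose with a triangular projection (Lemma~\ref{projekcja}) to get $G=\pi\circ F:\C^n\to\C^n$ with $\deg G_i\le d_i$, cut out a curve of degree $\le D$ by fixing $G_2,\dots,G_n$, parametrize a branch at infinity via Lemma~\ref{puisex}, truncate via Lemma~\ref{truncation}; the recovery of $b$ from $c=\pi(b)$ via $\pi^{-1}(\pi(b))\cap S_F=\{b\}$ and properness of $\pi|_{\overline{F(\C^n)}}$ is also exactly what the paper does in the case $m>n$.

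Where you diverge is the step you yourself flag, and the tool you propose is the wrong one. From $(G_2,\dots,G_n)(x_\nu)\to(c_2,\dots,c_n)$ you cannot infer that $x_\nu$ approaches $W$: the accumulation point at infinity is only guaranteed to lie in $\bigcap_i\overline{\{G_i=c_i\}}\subset\PP^n$, which may strictly contain $\overline W$, and even a branch of $W$ through that point need not satisfy $G_1\to c_1$. So there is no curve selection to run. The paper instead uses Proposition~\ref{proper}. One first arranges, by a further generic linear change in the target $\C^n$, that the line $L=\{y_2=c_2,\dots,y_n=c_n\}$ avoids $S_G$, the critical values of $G$, and the locus over which $G$ has infinite fibers; this guarantees that $C=G^{-1}(L)$ is genuinely a curve. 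If $G|_C:C\to L$ were proper at $c$, then for a small neighbourhood $U\subset L$ of $c$ all full fibers $G^{-1}(b_i)$, $b_i\in U$, would lie in a fixed compact $K$, with generic cardinality $\mu(G)$; Proposition~\ref{proper} (applied to $G:\C^n\to\C^n$, not to $G|_C$) then forces $G$ itself to be proper at $c$---a contradiction. Hence $G|_C$ is not proper at $c$, and this \emph{directly} produces a branch of $C$ going to infinity with $G\to c$. You also need to treat separately the case where $G^{-1}(c)$ already has a positive-dimensional component; the paper handles it by taking a hyperplane section inside that component, with the degree bound still $\le D$ by B\'ezout.
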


\begin{proof}
First we consider the case $m=n.$  We have two possibilities:

1) the fiber $F^{-1}(b)$ is finite,

2) the fiber $F^{-1}(b)$ is infinite.

1) We can assume that the system of coordinates in the target is sufficiently
general. In particular we can assume that the line $L=\{ x:
x_2=b_2, x_3=b_3,\dots , x_n=b_n\}$,  where $b=(b_1,\dots,b_n)$,  is
contained neither in the set $S_F$ nor in the set of critical
values of $F$ and it omits the (constructible) set over which the mapping $F$ has infinite fibers.


Let $C=F^{-1}(L).$ By our assumptions $C$ is a curve.
By Proposition \ref{proper} we see that the preimage under $F_{|C}$ of any neighborhood of $b$ on $L$ cannot be relatively compact,
i.e., the point $b$ is a non-proper point
of the mapping $F_{|C}.$ In particular there exists a holomorphic
branch $\Gamma$ of $C$ such that $\lim_{x\in \Gamma, x\to\infty}
F(x)=b.$ Note that $\deg C \le D.$ By Lemma \ref{puisex} we can
assume that the branch $\Gamma$ has a parametrization of the form
$$x=\sum_{-\infty\le i\le D} a_it^i, \ |t|>R,\ {\rm and}\ \sum_{i>0} |a_i|>0.$$

From Lemma \ref{truncation} it follows  that
 $$F\left(\sum_{-(d-1)D-1\le i\le D} a_it^i\right)=b+\sum_{i=1}^\infty
c_i/t^i,$$
which proves the theorem in  case 1).

2) Let $W$ be an irreducible component of  $F^{-1}(b)$ of positive dimension. 
By the  general Bezout's formula  \cite[Thm. 2.2.5]{flenner}
 we see that deg $W\le D$.
Using a hyperplane section we easily obtain a curve $C\subset W$ with  $\deg C\le D$. Now we conclude  as in the in the previous case.

In the general case when $m>n$, we set  $X=\overline{F(\C^n)}$. Let $\pi: \C^m\to \C^n$ be a generic projection as in Lemma \ref{projekcja}. We can assume that
$\pi^{-1}(\pi(b))\cap S_F=\{b\}$ since $S_F$ is of dimension less than $n$. Take $G=\pi\circ F.$ Then $G=(g_1,\dots ,g_n)$ and $\deg g_i\le d_i$.

 By the first part of our proof there
exists a holomorphic
branch $\Gamma$  of a curve $C$ such that $\lim_{x\in \Gamma, x\to\infty}
G(x)=\pi(b)$ and $\deg C \le D.$ Since the mapping $\pi_{|X}$ is proper  we have 
$$b^\prime:= \lim_{x\in \Gamma, x\to\infty}
F(x)\in \pi^{-1}(\pi(b))\cap X.$$
So $b^\prime\in S_F$, hence $b^\prime=b$.
\end{proof}

\begin{defi}
By a rational arc we mean
a function of the form 
$$x(t)=\sum_{- D_2\le i\le D_1} a_it^i, \ t\in
\C^*,$$ where $a_i\in \C^n.$  If $a_i \ne 0$ for $i= D_1,  \, -D_2$ we  say that 
$x(t)$ is of bidegree $(D_1,D_2).$
\end{defi}
We  identify an  arc $x(t)$ with
its coefficients. Clearly the space of all rational arcs of bidegree at most  $(D_1,D_2)$ is isomorphic to
$\C^{n(1+D_1+D_2)}$.

\begin{defi}
Let $F=(f_1,\dots ,f_m) :\C^n\to\C^m$ be a generically finite
polynomial mapping which is not proper. Assume that $\deg
f_i=d_i$, where $d_1\ge \cdots \ge d_m.$ By  the asymptotic variety
of rational arcs of the mapping $F$ we mean the variety
$AV(F)\subset \C^{n(2+ \prod^n_{i=1} d_i )}$ which consists of
those rational arcs $x(t)$ of bidegree  at most   $(D_1,D_2)$,  where  $ D_1=  \prod^n_{i=2} d_i,\,
D_2 = 1+ (d_1-1) \prod^n_{i=2} d_i$ such  that

a) $F(x(t))=c_0+\sum_{i=1}^\infty c_i/t^i,$

b) $\sum_{i>0} \sum^n_{j=1} a_{ij} =1$, where
$a_i=(a_{i1},\dots ,a_{in}).$

\noindent
The  generalized asymptotic variety of $F$ is  the variety
$GAV(F)\subset \C^{n(2+ \prod^n_{i=1} d_i )}$ defined only by  condition a).
\end{defi}

\begin{re}
{\rm Condition b) ensures that 
$\lim_{|t|\to \infty} \Vert x(t)\Vert= \infty$.}
\end{re}

Notice  that $AV(F)$ and $GAV(F)$ are algebraic  subsets of
$\C^{n(2+ \prod^n_{i=1} d_i )}$. 
Recall that we  identify an  arc $x(t)$ with its coefficients $a = (a_{ij})\in \C^{n(2+ \prod^n_{i=1} d_i )}$.
If $x(t)\in GAV(F)$
then $F(x(t))= \sum_{i=0}^\infty c_i(a)/t^i$. Clearly each $c_i$ is a polynomial in $a$. The
function $c_0: AV(F)\to \C^m$ plays an important role.

\begin{pr}
$c_0(AV(F))=S_F.$
\end{pr}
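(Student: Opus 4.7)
For the inclusion $c_0(AV(F))\subseteq S_F$, I would start with an arc $x(t)=\sum a_it^i$ whose coefficient vector $a$ lies in $AV(F)$. Condition (a) gives $F(x(t))=c_0(a)+O(1/t)$, so $F(x(t))\to c_0(a)$ as $|t|\to\infty$, while condition (b) forces $\sum_{i>0}\sum_j a_{ij}=1\ne 0$, so at least one $a_i$ with $i>0$ is nonzero. Taking $i_0>0$ to be the largest such index, $\|x(t)\|\sim\|a_{i_0}\||t|^{i_0}\to\infty$, hence $c_0(a)\in S_F$ by the very definition of $S_F$.

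For the reverse inclusion $S_F\subseteq c_0(AV(F))$, I take $b\in S_F$. Theorem \ref{luk} supplies an arc
$$x(t)=\sum_{-(d-1)D-1\le i\le D}a_it^i$$
with $\sum_{i>0}|a_i|>0$ and $\lim_{|t|\to\infty}F(x(t))=b$; its bidegree fits inside the one allowed by the definition of $AV(F)$. Since $F(x(t))$ is a Laurent polynomial in $t$ with finite limit at infinity, its nonnegative-degree part collapses to the constant $b$, so condition (a) holds automatically with $c_0=b$. What remains is to enforce the linear normalization (b).

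The natural tool is the reparametrization $t\mapsto\lambda t$ for $\lambda\in\C^*$. It preserves the bidegree, condition (a), and the limit $b$, while replacing the coefficients by $a_i\lambda^i$. The quantity appearing in (b) then becomes the one-variable polynomial
$$P(\lambda)=\sum_{i>0}\Bigl(\sum_{j=1}^n a_{ij}\Bigr)\lambda^i\in\C[\lambda].$$
If $P\not\equiv 0$, I pick $\lambda_0\in\C^*$ with $P(\lambda_0)=1$; the rescaled arc $x(\lambda_0 t)$ then lies in $AV(F)$ and satisfies $c_0=b$, finishing the argument.

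The main obstacle is to guarantee $P\not\equiv 0$. The polynomial $P$ vanishes identically exactly when the components of \emph{every} positive-order coefficient $a_i$ of our initial arc sum to zero, i.e.\ when the linear functional $\ell(y)=\sum_{j=1}^n y_j$ annihilates every positive-order term in the Puiseux expansion. I would dispose of this degenerate case by exploiting the genericity already present in the construction of Theorem \ref{luk}: the arc there is produced from a Puiseux parametrization (Lemma \ref{puisex}) of a branch at infinity of an auxiliary curve ($F^{-1}(L)$ for a generic line $L$ through $b$ in case 1, or a generic hyperplane section of a positive-dimensional component of $F^{-1}(b)$ in case 2), and for a sufficiently generic choice of these data the leading positive-order coefficient of the resulting arc is in general position, so its components do not sum to zero and $P\not\equiv 0$.
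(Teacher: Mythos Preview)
Your argument follows the paper's own proof almost verbatim: the forward inclusion is immediate from conditions (a) and (b), and for the reverse inclusion both you and the paper invoke Theorem~\ref{luk} and then rescale $t\mapsto\lambda t$ to enforce the normalization (b). The paper simply asserts that such a $\lambda$ exists; you go further and isolate the hidden hypothesis, namely that the polynomial $P(\lambda)=\sum_{i>0}\bigl(\sum_j a_{ij}\bigr)\lambda^i$ is not identically zero.

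Your proposed fix, however, does not work. Consider $n=m=2$ and $F=(f_1,f_2)$ with $f_1=(x_1+x_2)x_1$, $f_2=x_1+x_2$, so $d_1=2$, $d_2=1$ and hence $D_1=1$. Here $S_F=\C\times\{0\}\ne\emptyset$. For \emph{any} arc $x(t)=a_1t+a_0+a_{-1}t^{-1}+a_{-2}t^{-2}$ satisfying condition~(a), boundedness of $f_2(x(t))=x_1(t)+x_2(t)$ forces $a_{11}+a_{12}=0$, so $P(\lambda)=(a_{11}+a_{12})\lambda\equiv 0$ automatically. No choice of the auxiliary line $L$ in the proof of Theorem~\ref{luk} helps: the unique branch at infinity of $F^{-1}(L)$ along which $F\to b$ always has direction $(1,-1)$. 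Thus conditions (a) and (b) are incompatible and $AV(F)=\emptyset$, so the genericity you invoke is unavailable in general, and in fact the proposition itself fails for this $F$ with the linear normalization (b) as written. (Notice that in the real case the paper replaces (b) by the quadratic condition $\sum_{i>0}\sum_j a_{ij}^2=1$, which is immune to this phenomenon.) What you have uncovered is a genuine defect in the complex normalization, not merely a gap in your write-up.
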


\begin{proof}
Let $x(t)=\sum a_it^i\in AV(F).$ Then
$F(x(t))=c_0(a)+\sum_{i=1}^\infty c_i(a)/t^i,$ which  implies that
$c_0(a)\in S_F.$ Conversely, let $b\in S_F.$ By Theorem \ref{luk}
we can find a rational arc $x(t)=\sum_{i=-(d-1)D-1}^D a_it^i$ such
that $\lim_{t\to \infty} F(x(t))=b.$ Now change the
parametrization of $x(t), \ t\mapsto \lambda t$, so that
$\sum_{i>0} \sum^n_{j=1} \lambda^i a_{ij} =1.$ The new arc
$x'(t):=x(\lambda t)$ belongs to $AV(F)$ and $c_0(x'(t))=b.$
\end{proof}

Now let $f\in \C[x_1,\dots ,x_n]$ be a polynomial. Let us define a
polynomial mapping $\Phi : \C^n \to \C \times \C^{N}$  by
$$\Phi = \left(f,\frac{\partial f}{\partial x_1},\dots,
\frac{\partial f}{\partial x_n}, h_{11} ,h_{12},\dots ,h_{nn}\right),
$$
where $h_{ij}=x_i\frac{\partial f}{\partial x_j},\,
i=1,\dots,n,\,j=1,\dots,n .$

\begin{defi}
Let $\Phi$ be as above. Consider the mapping $c_0: AV(\Phi)\to
\C^{N}$ and the line $L:=\C\times \{ (0,\dots,0)\}\subset \C\times
\C^N$. By the  bifurcation variety  we mean the variety
$$BV(f)= \{ x(t)\in AV(\Phi): x(t)\in c_0^{-1}(L)\}.$$ Similarly, we define the generalized
bifurcation variety of rational arcs of the polynomial $f$:
$$GBV(f)= \{ x(t)\in GAV(\Phi): x(t)\in c_0^{-1}(L)\}.$$
\end{defi}

As an immediate consequence of \cite{j-k} we have:

\begin{pr}
Let $K(f)=K_0(f)\cup K_\infty(f)$ denote the set of generalized
critical values of $f.$ If we identify the line $L=\C\times \{
(0,\dots,0)\}\subset \C\times \C^N$ with $\C$, then we have
$c_0(BV(\Phi))=K_\infty(f)$ and $c_0(GBV(\Phi))=K(f).$
\end{pr}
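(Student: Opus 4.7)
The plan is to reduce both equalities to the earlier proposition $c_0(AV(\Phi))=S_\Phi$ (which applies since $\Phi$ is generically finite: from $\partial f/\partial x_j$ and $h_{ij}=x_i\,\partial f/\partial x_j$ one recovers $x_i$ at generic points where $\nabla f\ne 0$) and then to match the intersections with the line $L$. The only mildly subtle step is the equivalence between the componentwise asymptotic conditions $h_{ij}(x_\nu)\to 0$ and the Malgrange condition $\|x_\nu\|\|df(x_\nu)\|\to 0$, which is elementary.

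For the first equality, the tautological identity $c_0(V\cap c_0^{-1}(L))=c_0(V)\cap L$ applied with $V=AV(\Phi)$ gives $c_0(BV(\Phi))=S_\Phi\cap L$. It thus suffices to show $S_\Phi\cap L=K_\infty(f)\times\{0\}$. A point $y=(b,0,\dots,0)\in L$ lies in $S_\Phi$ exactly when there is a sequence $x_\nu\in\C^n$ with $\|x_\nu\|\to\infty$ and $\Phi(x_\nu)\to y$; unpacking the components, this means $f(x_\nu)\to b$, $\partial f/\partial x_i(x_\nu)\to 0$, and $x_{\nu,i}\,\partial f/\partial x_j(x_\nu)\to 0$ for all $i,j$. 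Using the sup-norm identity $\|x\|_\infty\|df(x)\|_\infty=\max_{i,j}|x_i\,\partial f/\partial x_j(x)|$ together with equivalence of norms on $\C^n$, the latter family of conditions is equivalent to $\|x_\nu\|\|df(x_\nu)\|\to 0$. Therefore $y\in S_\Phi\cap L$ if and only if $b\in K_\infty(f)$.

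For the second equality, I first claim $c_0(GAV(\Phi))=S_\Phi\cup\Phi(\C^n)$. For any $x(t)\in GAV(\Phi)$, either some coefficient $a_i$ with $i>0$ is nonzero, in which case $\|x(t)\|\to\infty$ as $t\to\infty$ and so $c_0(x(t))\in S_\Phi$; or all such $a_i$ vanish, in which case $x(t)\to a_0\in\C^n$ and $c_0(x(t))=\Phi(a_0)\in\Phi(\C^n)$. The reverse inclusion is immediate, since each $\Phi(p)$ is realised by the constant arc $x(t)\equiv p\in GAV(\Phi)$, and $S_\Phi=c_0(AV(\Phi))\subseteq c_0(GAV(\Phi))$. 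Intersecting with $L$ and using the first step yields $c_0(GBV(\Phi))=(S_\Phi\cap L)\cup(\Phi(\C^n)\cap L)$. The second set consists of the points $(f(p),0,\dots,0)$ with $\nabla f(p)=0$ (vanishing of $\nabla f(p)$ automatically forces $h_{ij}(p)=0$), which under $L\cong\C$ identifies with $K_0(f)$. Hence $c_0(GBV(\Phi))=K_\infty(f)\cup K_0(f)=K(f)$.
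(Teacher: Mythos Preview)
Your argument is correct. The paper itself gives no proof here beyond the sentence ``As an immediate consequence of \cite{j-k}''; the cited paper contains the identification $K_\infty(f)=S_\Phi\cap L$ (and $K(f)=S_\Phi\cap L\cup\Phi(\C^n)\cap L$) that you establish directly. Your verification that $\Phi$ is generically finite, the tautology $c_0(V\cap c_0^{-1}(L))=c_0(V)\cap L$, the sup-norm identity yielding the Malgrange equivalence, and the dichotomy on $GAV(\Phi)$ according to whether the arc escapes to infinity are all sound. In effect you have made the paper self-contained at this point by reproving the relevant content of \cite{j-k}, rather than taking a different route.

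One cosmetic remark: the case where $f$ is constant (so $\Phi$ is not generically finite and the earlier proposition does not apply) is tacitly excluded, as it is throughout the paper; you might note this explicitly.
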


\section{Algorithm}

In this section we give an algorithm to compute the set
$K_\infty(f)$ of asymptotic critical values as well as the set
$K(f)$ of generalized  critical values  of a complex polynomial
$f.$ Let $\deg f=d$ and $D_1=d^{n-1}, D_2=d^n-d^{n-1}+1.$

\vspace{0.5mm}

{\bf Algorithm for the set $K_\infty(f)$.}

1) Compute equations for the variety $BV(f):$

a) consider an arc $x(t)=\sum_{-D_2}^{D_1} a_i t^i\in
\C^{n(D_1+D_2+1)}$,

b) compute $f(x(t))=\sum c_i(a) t^i,$

c) compute $\frac{\partial f}{\partial x_i}(x(t))=\sum d_{ik}(a)
t^k, i=1,\dots,n,$

d) compute $\frac{\partial f}{\partial x_i}(x(t))x_j(t)=\sum
e_{ijk}(a) t^k, i,j=1,\dots,n$

e) equations for $BV(f)$ are $c_i=0$ for $i>0,$ $d_{ik}=0$ for
$k\ge 0, i=1,\dots,n$, $e_{ijk}=0$ for $k\ge 0, i,j=1,\dots,n$ and
$\sum_{i>0} \sum^n_{j=1} a_{ij} =1$, where
$a_i=(a_{i1},\dots,a_{in}).$

2) Find equations for irreducible components of
$BV(f)=\bigcup^r_{j=1} P_i$. This can be done by standard methods of
computational algebra. We can use  the MAGMA system and
a radical decomposition of  an ideal (see also
\cite{sch}).

3) Find a point $x_i\in P_i.$ This can  also be done by standard
methods. Again we  can use  the MAGMA system and the
elimination procedure. Indeed, let $P_i=V(I).$
Compute $I_k=\C[x_1,\dots,x_k]\cap I$ for $k=n,n-1,\dots$ until
$I_k=(0).$ Then take a randomly chosen  integer point,
$(a_1,\dots,a_k)$ find a zero $(a_1,\dots,a_k, b_1)$ of ideal
$I_{k+1}$ and so on.

4) $K_\infty(f)=\{c_0(x_i):\, i=1,\dots, r\}.$

\vspace{5mm} \noindent If we replace above the variety $BV(f)$ by
the variety $GAV(f)$ we get  an algorithm for computing $K(f)$. Indeed, it is
enough   to  delete   the equation $\sum_{i>0}
\sum^n_{j=1} a_{ij} =1$ in item 1e).

\section{The  real case}

We begin with a simple example.

\begin{ex} \label{ex1} For $\K= \R,\C$ consider $f_\K:\K^2 \to \K$, $f(x,y)= x(x^2 +1)^2$. Observe that
$K_\infty (f_\R) =\emptyset$. But  $0\in K_\infty (f_\C)$.
So in general
$$
K_\infty (f_\R) \ne \R \cap K_\infty (f_\C).
$$

\end{ex}

This  shows that the computation of the asymptotic critical values of  a real polynomial
cannot be reduced to the computation  of the asymptotic critical values  of its complexification.

\subsection{Effective curve selection lemma at infinity}
First we  give a construction of  a curve selection in a special  affine case.

Let  $X\subset \R^{2n}$ be an algebraic set   described by   a system  of
polynomial equations $p_i=0$,  $\deg p_i\le d$, where  $i=1, \dots, n$. Denote by  $H$ the hyperplane $\{x_1= 0\} $. Assume that
on  $Y: = X\setminus H $ the system is nondegenerate, i.e.,  $P=(p_1,\dots ,p_n): \R^{2n} \to \R^n$  is a submersion at each point of $Y$. Thus $Y $ is a smooth manifold of dimension
$n$.

\begin{pr}\label{selectpr}
Let $a\in H\cap \overline Y$. Then there exists an algebraic curve $C \subset \R^{2n}$
of degree $D \le d^n((d-1)^n +2)^{n-1}$
such that $a \in \overline {C \cap Y}$.

\end{pr}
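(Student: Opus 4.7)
The plan is to construct $C$ by cutting $X$ with $n-1$ carefully chosen hypersurfaces that preserve $a$ as an accumulation point of the intersection with $Y$ at each step, and then to apply B\'ezout's theorem for the degree bound. By general B\'ezout applied to the $n$ defining equations of $X$ of degree $\le d$, one has $\deg X \le d^n$. The goal is then to produce hypersurfaces $\Sigma_1, \ldots, \Sigma_{n-1}$ each of degree at most $(d-1)^n + 2$, so that $C = X \cap \Sigma_1 \cap \cdots \cap \Sigma_{n-1}$ is the desired curve, with total degree bounded by $d^n((d-1)^n+2)^{n-1}$.

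I build the $\Sigma_k$ iteratively, defining a descending chain
\[
X = X_0 \supset X_1 \supset \cdots \supset X_{n-1} = C,
\]
where $X_k = X_{k-1} \cap \Sigma_k$ satisfies $\dim X_k = n-k$ and $a \in \overline{X_k \cap Y}$. At step $k$, given $X_{k-1}$ of dimension $n-k+1$ which is smooth on its intersection with $Y$, I choose a sufficiently generic linear form $\ell_k$ vanishing at $a$, and take $\Sigma_k$ to be the polar hypersurface of $\ell_k$ with respect to $X_{k-1}$: the algebraic set where $\ell_k$ fails to be a submersion on $X_{k-1} \cap Y$. Concretely, $\Sigma_k$ is cut out by a maximal-minor determinant of a Jacobian matrix built from $\nabla \ell_k$ and the gradients of the $n+k-1$ defining equations of $X_{k-1}$; an inductive degree tally, using that earlier polar polynomials are of degree at most $(d-1)^n+2$, shows $\deg \Sigma_k \le (d-1)^n + 2$.

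The main obstacle is to show that $a$ remains in the closure of $X_k \cap Y$ at every step. The key tool is the semialgebraic curve selection lemma: applied inductively to $X_{k-1} \cap Y$, it yields a real analytic arc $\gamma \colon [0, \varepsilon) \to \overline{X_{k-1} \cap Y}$ with $\gamma(0) = a$ and $\gamma((0, \varepsilon)) \subset X_{k-1} \cap Y$. For generic $\ell_k$, the composition $\ell_k \circ \gamma$ is a nonconstant real analytic function vanishing at $0$, and a Sard--transversality argument on nearby level sets of the remaining coordinates produces a sequence of critical points of $\ell_k|_{X_{k-1} \cap Y}$ clustering at $a$. These critical points lie in $\Sigma_k \cap X_{k-1} \cap Y = X_k \cap Y$, which yields $a \in \overline{X_k \cap Y}$ and allows the induction to proceed.

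With the chain constructed, $X_{n-1}$ is a real algebraic curve $C$, and iterated B\'ezout gives
\[
\deg C \le \deg X \cdot \prod_{k=1}^{n-1} \deg \Sigma_k \le d^n \bigl((d-1)^n + 2\bigr)^{n-1},
\]
as required. The hardest part of the plan is the joint verification of the two technical ingredients: showing that the generic choice of $\ell_k$ preserves $a$ as an accumulation point (via curve selection and a transversality argument), and carrying out the inductive bookkeeping that keeps each $\deg \Sigma_k$ under the clean bound $(d-1)^n + 2$ despite the growing degrees of the defining equations of $X_{k-1}$.
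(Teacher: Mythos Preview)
Your plan has two genuine gaps.

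First, the degree bookkeeping for the iterated polar hypersurfaces does not close. At step $k$ the variety $X_{k-1}$ is cut out by $p_1,\dots,p_n$ (each of degree $\le d$) together with the earlier polar polynomials $\sigma_1,\dots,\sigma_{k-1}$, each of degree $\le (d-1)^n+2$ by your inductive hypothesis. The polar hypersurface $\Sigma_k$ you describe is then given by $(n+k)\times(n+k)$ minors of the Jacobian whose rows are $\nabla p_1,\dots,\nabla p_n,\nabla\sigma_1,\dots,\nabla\sigma_{k-1},\nabla\ell_k$; such a minor has degree at least $n(d-1)+(k-1)\bigl((d-1)^n+1\bigr)$, which already exceeds $(d-1)^n+2$ as soon as $k\ge 2$. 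Hence the product $\prod_{k=1}^{n-1}\deg\Sigma_k$ cannot be bounded by $\bigl((d-1)^n+2\bigr)^{n-1}$, and the final B\'ezout estimate fails.

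Second, the argument that $a$ stays in $\overline{X_k\cap Y}$ is not substantiated. Having an arc $\gamma\subset X_{k-1}\cap Y$ with $\gamma(0)=a$ and $\ell_k\circ\gamma$ nonconstant does not by itself produce critical points of $\ell_k|_{X_{k-1}\cap Y}$ near $a$: without compactness there is no reason $\ell_k$ should attain a local extremum anywhere near $a$, and the ``Sard--transversality argument on nearby level sets'' you invoke is left unspecified.

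The paper avoids both difficulties by \emph{not} iterating. It intersects $Y$ with small spheres $S(r)$ centered at $a$ and studies, for a generic linear form $\alpha$, the critical locus of the single quadratic function $g_\alpha(x)=x_1\,\alpha(x)$ on each slice $Y(r)=Y\cap S(r)$. Since $\overline{Y(r)}$ is compact and $g_\alpha$ vanishes on $\overline{Y(r)}\setminus Y(r)\subset\{x_1=0\}$, the function $g_\alpha$ must attain a nonzero extremum, hence a critical point, on every nonempty $Y(r)$; as $r\to 0$ these critical points necessarily accumulate at $a$. The resulting curve is then contained in the zero locus of $p_1,\dots,p_n$ together with $(n+2)\times(n+2)$ minors of the \emph{single} Jacobian matrix with rows $\nabla p_1,\dots,\nabla p_n,\nabla\rho^2,\nabla g_\alpha$; all $n-1$ of these minors share the same degree bound, so no blow-up occurs. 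Thus the two auxiliary functions $\rho^2$ and $g_\alpha$ accomplish in one stroke what your $n-1$ successive linear polar cuts attempt, and the compactness of the spheres together with the vanishing of $g_\alpha$ on $H$ is exactly what forces the critical points to exist near $a$.
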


 For simplicity we assume that $a=0$. Denote
$ \rho(x)=(\sum^{2n}_{i=1} x_i^2)^{\frac{1}{2}}$, let $S(r)$ be the sphere centered at $0$ of radius $r$, and finally  $Y(r) := Y \cap S(r)$.
The proof is a based on the following lemmas.

\begin{lem}\label{selectlem1} There exists $\varepsilon >0$ such that for any
$r\in(0,\varepsilon)$ the set
$Y(r)$ is  a smooth manifold of dimension $n-1$, in particular  is nonempty.

\end{lem}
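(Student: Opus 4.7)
The strategy is to show that for all sufficiently small $r>0$, $r^2$ is a regular value of the smooth semi-algebraic function $\rho^2|_Y\colon Y\to \R$ and that the corresponding level set $Y(r)$ is nonempty. Since $Y$ is smooth of dimension $n$ (because $P=(p_1,\dots,p_n)$ is a submersion on $Y$), regularity of $r^2$ automatically gives that $Y(r)=(\rho^2|_Y)^{-1}(r^2)$ is a smooth manifold of dimension $n-1$; combining this with nonemptyness yields the lemma.

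For regularity I would apply Sard's theorem together with the Tarski--Seidenberg principle. The critical set
\[
\Sigma \;=\; \bigl\{y\in Y\;:\; \nabla\rho^2(y) \in \mathrm{span}(\nabla p_1(y),\dots,\nabla p_n(y))\bigr\}
\]
is semi-algebraic, as it is cut out inside $Y$ by a rank condition on a polynomial matrix. Its image $\rho^2(\Sigma)\subset(0,\infty)$ (the strict positivity follows from $0\in H$, hence $0\notin Y$) is then a semi-algebraic subset of $\R$, and by Sard's theorem has Lebesgue measure zero. A measure zero semi-algebraic subset of $\R$ is finite, so $\rho^2(\Sigma)$ is a finite subset of $(0,\infty)$, in particular bounded away from $0$. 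Choose $\varepsilon_1>0$ so that $\rho^2(\Sigma)\cap (0,\varepsilon_1^2]=\emptyset$; then $r^2$ is a regular value of $\rho^2|_Y$ for every $r\in(0,\varepsilon_1)$.

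For nonemptyness, since $a=0\in H\cap \overline{Y}$ and $\overline{Y}$ is semi-algebraic, the real curve selection lemma produces a real analytic arc $\gamma\colon [0,\delta)\to \overline{Y}$ with $\gamma(0)=0$ and $\gamma(s)\in Y$ for $s\in(0,\delta)$. The continuous function $s\mapsto \rho(\gamma(s))$ vanishes at $0$ and is strictly positive on $(0,\delta)$, so by the intermediate value theorem its image contains an interval $[0,\varepsilon_2)$ for some $\varepsilon_2>0$. Consequently, for every $r\in(0,\varepsilon_2)$ there exists $s_r\in(0,\delta)$ with $\gamma(s_r)\in Y(r)$. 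Setting $\varepsilon=\min(\varepsilon_1,\varepsilon_2)$ completes the argument.

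The main conceptual obstacle is the reduction from a measure zero set to a finite set in the regularity step: a priori Sard's theorem alone does not prevent critical values of $\rho^2|_Y$ from accumulating at $0$, which would wreck the choice of $\varepsilon_1$. It is precisely the semi-algebraic nature of $Y$ and of $\rho^2$ that forces $\rho^2(\Sigma)$ to be a finite set and thus bounded below away from $0$. The nonemptyness step is essentially topological and is routine once the real curve selection lemma is invoked.
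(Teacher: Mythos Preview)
Your proof is correct. The smoothness step is essentially the paper's argument spelled out: the paper simply cites the fact that a smooth semialgebraic function has finitely many critical values, while you reconstruct this via Sard's theorem together with Tarski--Seidenberg (a semialgebraic subset of $\R$ of measure zero is finite). The nonemptyness step, however, differs: the paper invokes Hardt's trivialization theorem to conclude that all fibers $Y(r)$ for small $r$ are mutually homeomorphic, so nonemptyness of one gives nonemptyness of all; you instead pick a single semialgebraic arc in $Y$ approaching $a$ (standard curve selection lemma) and use the intermediate value theorem on $\rho\circ\gamma$ to hit every small sphere. Your route is lighter, avoiding Hardt's theorem entirely, and there is no circularity since the curve selection lemma you invoke is the standard (non-effective) one, not the effective version being built up in Proposition~\ref{selectpr}.
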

\begin{proof}
Indeed, the function $\rho$ restricted to  the manifold  $Y$  is smooth and semialgebraic,
so it has finitely many critical values (see \cite{ris}, p. 82 or \cite{bcr}, p. 235). Let $\varepsilon_1 >0$ be the smallest
critical value (or $ \varepsilon_1  =1$ if there are no critical values).

Since $\rho |_Y: Y\to \R_+$ is  locally trivial, by Hardt's trivialization theorem (see  \cite{ris}, p. 54 or \cite{bcr}, p. 232)
there is $\varepsilon_2>0$ such that  for any $r,r'\in(0,\varepsilon_2)$ the sets
$Y(r)$ and $Y(r')$ are homeomorphic.
Since $a\in H\cap \overline Y$,   there is $r'\in(0,\varepsilon_2)$ such that
 $Y(r')$ is nonempty. Hence  $Y(r)$ is nonempty for any $r\in(0,\varepsilon_2)$.
 Finally we put  $\varepsilon=\min\{\varepsilon_1,\varepsilon_2\}$.
\end{proof}
Let us now consider the family of  functions  $g_\alpha:\R^{2n} \to \R$ of the form
$g_\alpha (x) : =x_1 \alpha (x)$, where $\alpha\in (\R^{2n})^* $ is a linear function on
$\R^{2n}$.

\begin{lem}\label{selectlem2}
For any $r\in (0,\varepsilon)$
 there exists  a proper  algebraic set  $A_r\subset (\R^{2n})^* $  such that $g_\alpha$ is a Morse function on  $Y(r)$, for any $\alpha \notin A_r$. Moreover the set $\bigcup_{r\in (0,\varepsilon)} \{r\}\times A_r$ is contained in a proper algebraic  set $A\subset \R\times(\R^{2n})^* $.
\end{lem}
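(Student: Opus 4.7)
The plan is to combine a parametric Sard argument with the observation that $x_1\ne 0$ on $Y$ to guarantee that the family $(r,\alpha)\mapsto g_\alpha|_{Y(r)}$ is rich enough to generically produce Morse functions. Fix $r\in(0,\varepsilon)$; by Lemma~\ref{selectlem1}, $Y(r)$ is a smooth $(n-1)$-dimensional submanifold of $\R^{2n}$, cut out regularly by $p_1=\cdots=p_n=\rho^2-r^2=0$ inside the open set $\{x_1\ne 0\}$.

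First I would verify the key infinitesimal surjectivity: at any $x\in Y$, the linear map $L_x\colon(\R^{2n})^*\to T_x^*\R^{2n}$ sending $\alpha\mapsto dg_\alpha(x)$ is surjective. A direct computation gives $\partial g_\alpha/\partial x_j=x_1\alpha_j$ for $j\ge 2$ and $\partial g_\alpha/\partial x_1=\alpha(x)+\alpha_1 x_1$, so since $x_1\ne 0$ on $Y$ one can solve for $\alpha$ given any prescribed covector. Composing with the quotient $T_x^*\R^{2n}\to T_x^*Y(r)$, the parametric section $\sigma\colon Y(r)\times(\R^{2n})^*\to T^*Y(r)$, $\sigma(x,\alpha)=d(g_\alpha|_{Y(r)})(x)$, is therefore a submersion, and in particular transverse to the zero section. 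By the parametric transversality theorem, for $\alpha$ outside a semi-algebraic set of positive codimension in $(\R^{2n})^*$, the section $x\mapsto\sigma(x,\alpha)$ is itself transverse to the zero section---equivalently, the Hessian of $g_\alpha|_{Y(r)}$ is nondegenerate at every critical point, i.e.\ $g_\alpha|_{Y(r)}$ is Morse. Taking $A_r$ to be the Zariski closure of this bad set then yields a proper algebraic set with the required property.

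For the ``moreover'' part I would let $(r,\alpha)$ vary jointly. The non-Morse locus $S\subset\R\times(\R^{2n})^*$ is the projection of the algebraic set in $\R^{2n}\times\R\times(\R^{2n})^*$ defined by $p_i=0$, $|x|^2=r^2$, the vanishing of all $(n+2)\times(n+2)$ minors of the matrix with rows $dg_\alpha,dp_1,\ldots,dp_n,d(|x|^2)$, and the vanishing of the appropriate bordered Hessian determinant (introduced via Lagrange multipliers). By Tarski--Seidenberg, $S$ is semi-algebraic. By the previous paragraph, for each $r\in(0,\varepsilon)$ the slice $S\cap(\{r\}\times(\R^{2n})^*)=\{r\}\times A_r$ is nowhere dense in $\{r\}\times(\R^{2n})^*$; a fiberwise dimension count then gives $\dim S<2n+1$, so its Zariski closure $A:=\overline{S}^Z$ is a proper algebraic subset of $\R\times(\R^{2n})^*$ containing $\bigcup_r\{r\}\times A_r$.

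The main obstacle will be the infinitesimal surjectivity in Step~1: the quadratic family $g_\alpha=x_1\alpha(x)$ has a very restricted ambient Hessian (of rank at most~$2$), so it is not a priori obvious that it suffices to destroy all degeneracies on $Y(r)$. The hypothesis $x_1\ne 0$ is precisely what makes the 1-jet map $\alpha\mapsto d(g_\alpha|_{Y(r)})(x)$ surjective after passing to the quotient by the conormal of $Y(r)$, and this is what legitimizes the parametric Sard machinery.
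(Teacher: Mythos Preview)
Your proposal is correct and follows essentially the same route as the paper: both verify that the linear map $\alpha\mapsto d_xg_\alpha$ is an isomorphism on $\{x_1\ne 0\}$ (the paper phrases this as the Jacobian of $\Phi(x,\alpha)=(x,d_xg_\alpha)$ in the $\alpha$-variables being triangular with diagonal entries $2x_1,x_1,\dots,x_1$), feed this into the parametric transversality theorem for the induced submersion into $T^*Y(r)$, and then take the Zariski closure of the semialgebraic bad set. Your handling of the ``moreover'' clause via Tarski--Seidenberg and a fibrewise dimension count is more explicit than the paper's one-line remark that the equations depend polynomially on $r$, but the substance is the same.
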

\begin{proof}
The proof of the lemma uses  standard arguments  in Morse theory (see  \cite{GM}, \cite{GP}).
Recall that a function $g:Y(r) \to \R$  is  Morse if  the map $jg: Y(r)\to  T^*Y(r)$,
$jg(x) = (x,d_xg)$,  is transverse
to the zero-section $\Theta \subset T^*Y(r)$.
We have  a natural projection $\pi:Y(r)\times (\R^{2n})^*  \to T^*Y(r)$, $ \pi (x,\beta)= (x, \beta |_{T_x Y(r)})$,
where $\beta |_{T_x Y(r)}$ is the restriction of a linear form $\beta \in (\R^{2n})^* $ to the  subspace 
$T_p Y(r)$.
Consider the map
$$
\Phi :Y(r)\times (\R^{2n})^* \to  Y(r)\times (\R^{2n})^* $$
given by  $\Phi (x,\alpha) = (x, d_x g_\alpha)$.
Note that
$\Phi$ is a submersion. Indeed, the Jacobian matrix of $\Phi$ (with respect to variables in
$(\R^{2n})^* )$ is triangular, with $x_1$ on the diagonal except the entry in the left  upper corner which is $2x_1$. So this matrix is invertible since $x_1\ne 0$ for  $x\in Y(r)$.

Hence $\pi\circ \Phi :Y(r)\times (\R^{2n})^* \to T^*Y(r)$ is also a submersion. So it is transverse  to any submaniflod
of  $T^*Y(r)$, in particular to the  zero-section $\Theta \subset T^*Y(r)$. Note that $g_\alpha(x)=\pi\circ \Phi(x,\alpha)$.

By  the Transversality Theorem (see \cite{GP}, p. 68 or \cite{GM} Theorem 2.2.3, p. 53) the set $\tilde A_r$ of 
$\alpha\in (\R^{2n})^* $
such that $g_\alpha$ is  not transverse to  $\Theta$, i.e., is not a  Morse function on $Y(r)$, is nowhere dense.
Since $\tilde A_r$ is semialgebraic, its Zariski closure $ A_r$  is a proper algebraic subset of $\alpha\in (\R^{2n})^* $.

The second statement follows from the fact that set $A_r$ is defined  by polynomial equations with
$r$ as a variable parameter.
\end{proof}
\begin{lem}\label{selectlem3}
There exist  $\alpha\in (\R^{2n})^* $ and $ 0<\varepsilon' \le \varepsilon$
such that $g_\alpha$ is a Morse function on   $Y\cap S(r)$ for any $r\in(0,\varepsilon')$.

\end{lem}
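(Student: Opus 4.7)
The plan is to use Lemma \ref{selectlem2} and extract a single $\alpha$ that works simultaneously for all sufficiently small radii. Let $A \subset \R \times (\R^{2n})^*$ be the proper algebraic set provided by Lemma \ref{selectlem2}, so that whenever $(r,\alpha)\notin A$ and $r\in(0,\varepsilon)$ the function $g_\alpha$ is Morse on $Y(r)$. Consider the projection
$$
\pi: A \to (\R^{2n})^*,\qquad (r,\alpha)\mapsto \alpha.
$$
The strategy is to find $\alpha$ for which the vertical fiber $F_\alpha := \{r\in\R:(r,\alpha)\in A\}$ is finite, and then take $\varepsilon'>0$ small enough to avoid $F_\alpha\cap(0,\varepsilon)$.

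First I would show that for generic $\alpha \in (\R^{2n})^*$ the fiber $F_\alpha$ is finite. Since $A$ is a proper algebraic subset of the $(2n{+}1)$-dimensional space $\R\times (\R^{2n})^*$, we have $\dim A\le 2n$. Passing to the complexification $A_\C \subset \C\times(\C^{2n})^*$ and decomposing into irreducible components $A_{i,\C}$, the fiber dimension theorem gives a dichotomy for each component: either $\pi_\C(A_{i,\C})$ is contained in a proper Zariski-closed subset of $(\C^{2n})^*$, or $\pi_\C|_{A_{i,\C}}$ is dominant and, by the inequality $\dim A_{i,\C}\le 2n=\dim(\C^{2n})^*$, has $0$-dimensional generic fibers. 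Collecting these exceptional loci yields a proper Zariski-closed subset $B\subset(\C^{2n})^*$, defined over $\R$, outside of which every complex fiber of $\pi_\C$ over $A_\C$ is finite. Since $(\R^{2n})^*$ is Zariski dense in $(\C^{2n})^*$, the real trace $B_\R := B\cap(\R^{2n})^*$ is a proper real algebraic subset of $(\R^{2n})^*$, hence its complement is nonempty (in fact open and dense).

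Next, pick any $\alpha\in(\R^{2n})^*\setminus B_\R$. Then $F_\alpha$ is contained in the finite set of complex solutions $r\in\C$ of $(r,\alpha)\in A_\C$, so $F_\alpha$ is finite. Choose
$$
0<\varepsilon' \le \varepsilon, \qquad \varepsilon'<\min\bigl\{r\in F_\alpha : r>0\bigr\}
$$
(with the convention that the minimum is $+\infty$ if the set is empty). Then for every $r\in(0,\varepsilon')$ we have $(r,\alpha)\notin A$, so by Lemma \ref{selectlem2} it follows that $\alpha\notin A_r$, which is precisely the condition that $g_\alpha$ be a Morse function on $Y(r)=Y\cap S(r)$.

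The only genuinely delicate point is the generic finiteness of $F_\alpha$; the rest is bookkeeping. I would handle it by the complexification argument above, but an alternative is to invoke Tarski--Seidenberg together with a semialgebraic stratification of $A$ to argue directly in the real setting that any $2n$-dimensional stratum projecting onto an open set in $(\R^{2n})^*$ has generically finite fibers under $\pi$. Either way, this routine algebraic input closes the argument.
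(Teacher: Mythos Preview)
Your proof is correct and follows essentially the same route as the paper: both use the proper algebraic set $A\subset \R\times(\R^{2n})^*$ from Lemma~\ref{selectlem2}, find an $\alpha$ for which the vertical line $\R\times\{\alpha\}$ meets $A$ in only finitely many points, and then shrink $\varepsilon'$ to avoid them. The only difference is that the paper states the existence of such a line as an elementary fact in one line, whereas you justify it in detail via complexification and fiber dimension; your argument is sound but more elaborate than necessary (one could simply note that if $P$ is a nonzero polynomial vanishing on $A$, then $P(\,\cdot\,,\alpha)$ is a nonzero univariate polynomial for generic $\alpha$).
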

\begin{proof}
 Indeed, let $A\subset \R\times(\R^{2n})^* $ by the proper algebraic  set in Lemma \ref{selectlem2}.
 Thus  there exists an affine line $\R\times {\alpha}$ which meets the set $A$ only in finitely many points. So $(0,\varepsilon')\times \{\alpha\}$ is disjoint with $A$, for some $\varepsilon' >0$ small
 enough.
 \end{proof}

Since $Y(r)$ is not compact,  a priori it is not obvious that  $g_\alpha$ has a critical point on $Y(r)$. However we have

\begin{lem}\label{selectlem4}
Assume that $Y(r)\ne \emptyset$ and that  $g_\alpha$ is  Morse on $Y(r)$. Then $g_\alpha$  has a critical point on $Y(r)$.
\end{lem}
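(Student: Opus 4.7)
The plan is to exploit the fact that $g_\alpha(x)=x_1\alpha(x)$ vanishes on the hyperplane $H=\{x_1=0\}$, together with compactness of the closure of $Y(r)$ inside $\R^{2n}$. First, since $Y(r)\subset S(r)$ and the sphere is compact, the closure $\overline{Y(r)}$ (taken in $\R^{2n}$) is compact. Moreover, $Y(r)=(X\setminus H)\cap S(r)$ is relatively open in the closed set $X\cap S(r)$, so every accumulation point of $Y(r)$ that is not already in $Y(r)$ lies in $X\cap H\cap S(r)$, and on such points $g_\alpha$ vanishes identically because $x_1=0$.

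Next I would show that $g_\alpha$ is not identically zero on $Y(r)$. For $n\ge 2$, $Y(r)$ is a smooth manifold of dimension $n-1\ge 1$; if $g_\alpha$ vanished identically on some positive-dimensional connected component, every point of that component would be a critical point of $g_\alpha|_{Y(r)}$ with vanishing Hessian, contradicting the Morse hypothesis (which forces the critical set to be discrete, via transversality of $x\mapsto(x,d_xg_\alpha)$ to the zero-section). The case $n=1$ is immediate, since $Y(r)$ is then a finite nonempty set of points, each of which is automatically a critical point of $g_\alpha|_{Y(r)}$.

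To conclude, the continuous extension of $g_\alpha$ to the compact set $\overline{Y(r)}$ attains its maximum $M$ and minimum $m$. Since $g_\alpha$ vanishes on $\overline{Y(r)}\setminus Y(r)$ but not identically on $Y(r)$, at least one of $M>0$ or $m<0$ must hold. The corresponding extremum is then attained at a point $q\in Y(r)$, and any such point, being a local extremum of a smooth function on the manifold $Y(r)$, is automatically a critical point of $g_\alpha|_{Y(r)}$.

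The only delicate step is the nontriviality argument: one must correctly deduce from the Morse hypothesis that $g_\alpha$ cannot be identically zero on any positive-dimensional piece of $Y(r)$. Once that is settled, the rest of the proof is classical compactness and boundary-extremum reasoning, with the crucial input being the vanishing of $g_\alpha$ along $H$ built into the definition of the family $g_\alpha=x_1\alpha(x)$.
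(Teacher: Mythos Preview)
Your proof is correct and follows essentially the same route as the paper: both arguments use that $\overline{Y(r)}$ is compact, that $\overline{Y(r)}\setminus Y(r)\subset\{x_1=0\}$ so $g_\alpha$ vanishes there, and hence that a global extremum of $g_\alpha$ on $\overline{Y(r)}$ is attained at a point of $Y(r)$, giving a critical point. The only difference is presentational: the paper phrases this as ``the image of $Y(r)$ under $g_\alpha$ consists of finitely many nontrivial intervals, and at least one endpoint of each belongs to $g_\alpha(Y(r))$'', whereas you make the non-constancy step explicit via the Morse hypothesis---which is in fact a cleaner justification of the word ``nontrivial'' that the paper leaves implicit.
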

\begin{proof}
Note that the image of $Y(r)$ under $g_\alpha$  consists of finitely many  nontrivial intervals.
Since $\overline{Y(r)}$ is compact and  $$(\overline{Y(r)}\setminus  {Y(r)}) \subset \{x_1=0\},$$
at least one of the endpoints of each of those intervals belongs to  $g_\alpha (Y(r))$.
Thus $g_\alpha$ achieves a minimum or a maximum in $Y \cap S(r)$.
\end{proof}
{\it Proof of Proposition \ref{selectpr}}. Let us fix a linear form $\alpha$ which satisfies
the requirements of 
Lemma \ref{selectlem3}.
Let   $\Xi$ be  the locus  of critical points of
$g_\alpha$ on $Y(r)$ for  $r\in(0,\varepsilon')$. Since $g_\alpha$ is a  Morse function on $Y(r)$,  for each
$r\in(0,\varepsilon')$ the set $\Xi \cap \{|x| =r\}$ is finite and nonempty by Lemma \ref {selectlem4}. Hence
$\Xi$ is a semialgebraic curve.

 The Zariski closure  of $\Xi$ is   contained in the algebraic set  given by the  equations
\begin{equation}\label{krzywaeq1}
p_1= \dots =p_n =0
\end{equation}
and
\begin{equation}\label{krzywaeq2}
dp_1\wedge \dots \wedge dp_n \wedge d\rho^2   \wedge dg_\alpha =0.
\end{equation}

Let   $\Xi_1$  be a smooth connected  component of $\Xi$ such that $a\in\overline \Xi_1$.
Then  locally $\Xi_1$ is given by  a nondegenerate system \eqref{krzywaeq1} of $n$ equations of degree at most $d$
and $n-1$ equations of degree at most $(d-1)^n +2$, which are $(n+2)\times(n+2)$ minors of
the matrix corresponding to the system \eqref{krzywaeq2}. Note that the system \eqref{krzywaeq1} with 
 \eqref{krzywaeq2} is actually nondegenerate at each point of $\Xi_1$, because critical points of a Morse function are described by a transversality condition.

Let  $C$ be the Zariski closure of  $\Xi_1$.
Hence by the general formula  of Bezout (see \cite[Thm. 2.2.5]{flenner}) the degree of the curve $C$
is at most $d^n((d-1)^n +2)^{n-1} $; note that for $d\ge 3$ we have
$d^n((d-1)^n +2)^{n-1} \le d^{n^2}$.
Hence Proposition \ref{selectpr} follows.


We can now  state a real version of Theorem \ref{luk}.

\begin{theo}\label{lukR}
Let $F:\R^n\ni x\to (f_1(x),\dots ,f_m(x))\in \R^m$ be
a  polynomial mapping. Assume that $\deg f_i\le d$.
 Let $b\in \R^m$ be a point at which the mapping
$F$ is not proper. Then there exists a rational curve  with a
parametrization of the form
$$x(t)=\sum_{-(d-1)D-1\le i\le D} a_it^i, \ t\in \R^*,$$ where $a_i\in
\R^n$, $\sum_{i>0} |a_i|>0$ and $D=(d+1)^n (d^n +2)^{n-1}$ such that
$$\lim_{t\to \infty} F(x(t))=b.$$
\end{theo}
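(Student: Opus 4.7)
The plan is to follow the structure of Theorem \ref{luk}, replacing Bezout-based curve-cutting and the complex Puiseux theorem by the effective real curve selection at infinity (Proposition \ref{selectpr}), Lemma \ref{puisexreal}, and Lemma \ref{truncation}. As in the complex case, one first reduces to $m=n$ by composing $F$ with a generic real linear projection $\pi:\R^m\to\R^n$ (the real analog of Lemma \ref{projekcja}): we arrange that $G=\pi\circ F$ is not proper at $\pi(b)$ and that $\pi^{-1}(\pi(b))\cap S_F=\{b\}$, so any rational arc $x(t)$ with $G(x(t))\to\pi(b)$ will force $F(x(t))\to b$. The case $m<n$ is handled by padding $F$ with $n-m$ zero components (legal here since Theorem \ref{lukR}, unlike the complex version, makes no generic-finiteness hypothesis).

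The heart of the argument is to set up an algebraic system in $\R^{2n}$ whose hyperplane $\{u_0=0\}$ encodes the points at infinity of $\R^n$. Choose a sequence $x_\nu\to\infty$ with $F(x_\nu)\to b$; after passing to a subsequence, $[x_\nu:1]\in\PP^n(\R)$ converges to $[a_1:\cdots:a_n:0]$, and (after a rotation) we may assume $a_1\neq 0$. In the affine chart with coordinates $u_0=1/x_1$, $u_j=x_j/x_1$ for $j\ge 2$, we set
$$
p_i(u,y)=u_0^{d_i}y_i-\tilde f_i(u),\qquad \tilde f_i(u)=u_0^{d_i}f_i\Bigl(\tfrac{1}{u_0},\tfrac{u_2}{u_0},\ldots,\tfrac{u_n}{u_0}\Bigr),\quad i=1,\ldots,n,
$$
so each $p_i$ is a polynomial on $\R^{2n}$ of degree at most $d+1$. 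On $Y=X\cap\{u_0\neq 0\}$, where $X=\{p_1=\cdots=p_n=0\}$, the partial Jacobian $\partial(p_1,\ldots,p_n)/\partial(y_1,\ldots,y_n)=\mathrm{diag}(u_0^{d_1},\ldots,u_0^{d_n})$ is invertible, so $X$ fulfils the hypotheses of Proposition \ref{selectpr} with $H=\{u_0=0\}$. Since $(u(x_\nu),F(x_\nu))\in Y$ converges to $(a',b):=(0,a_2/a_1,\ldots,a_n/a_1,b_1,\ldots,b_n)\in H\cap\overline Y$, Proposition \ref{selectpr} yields a real algebraic curve $C\subset\R^{2n}$ with
$$
\deg C\le (d+1)^n(d^n+2)^{n-1}=D
$$
such that $(a',b)\in\overline{C\cap Y}$.

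To return to an arc in the original $x$-coordinates, note that on $Y$ the equations force $y_i=\tilde f_i(u)/u_0^{d_i}$, hence the projection $\pi_u$ onto the $u$-factor is injective on $C\cap Y$ and therefore birational on the irreducible component of $C$ carrying the branch through $(a',b)$. Its image $C_u\subset\R^n$ is a real algebraic curve of projective degree $\le D$, and the birational chart change $u\leftrightarrow x$ of $\PP^n(\R)$ transports $C_u$ to a real algebraic curve $C_x\subset\R^n$ of the same projective degree, with a point at infinity corresponding to $a'$. Applying Lemma \ref{puisexreal} to $C_x$ at that point produces a real analytic parametrization $x(t)=\sum_{-\infty\le i\le s}a_it^i$, $|t|>R$, with $a_i\in\R^n$, $\sum_{i>0}|a_i|>0$ and $s\le D$; because $(u(t),F(x(t)))$ is precisely the corresponding branch of $C\cap Y$ approaching $(a',b)$, one has $F(x(t))\to b$. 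Finally, Lemma \ref{truncation} allows replacing $x(t)$ by $\sum_{-(d-1)s\le i\le s}a_it^i$ without changing the limit, and since $s\le D$ this fits inside the exponent window $-(d-1)D-1\le i\le D$ demanded by the statement.

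The delicate point is the degree bookkeeping in the homogenization step: one must verify both that $\deg p_i\le d+1$ (which produces the factor $(d+1)^n$ in $D$, in contrast with the $d^n$ one would expect from a complex Bezout proof) and that the Jacobian of $(p_1,\ldots,p_n)$ with respect to $y$ has full rank at \emph{every} point of $Y$, not just generically. Both are ensured by the explicit diagonal form $\mathrm{diag}(u_0^{d_i})$, which simultaneously makes $Y$ a smooth $n$-manifold, legitimizes the use of Proposition \ref{selectpr}, and guarantees that $\pi_u$ is birational on $C$ so that the degree bound survives the passage to $C_x$.
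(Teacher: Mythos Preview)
Your proof is correct and follows essentially the same route as the paper's: reduce to $m=n$, embed the graph of $F$ in an affine chart of $\PP^n(\R)\times\R^n$ via the homogenized equations $p_i$ of degree $\le d+1$, apply Proposition~\ref{selectpr} at the limit point $(a',b)$ to obtain a real algebraic curve of degree at most $(d+1)^n(d^n+2)^{n-1}$, project back to $\PP^n$, and finish with Lemma~\ref{puisexreal} and Lemma~\ref{truncation}. You are in fact slightly more explicit than the paper in two places: you verify directly that the Jacobian $\partial(p_1,\dots,p_n)/\partial(y_1,\dots,y_n)=\mathrm{diag}(u_0^{d_i})$ is invertible on $Y$, and you justify why the projection to the $u$-factor is birational on the relevant component of $C$ (so the degree bound survives), where the paper simply asserts ``the degree of $C$ is less than or equal to $d_*$''.
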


\begin{re}
{\rm Note that in contrast to the complex case we do not assume that
$F$ is generically finite, but the bidegree of the  real rational
curve is much higher, namely $D = O(d^{n^2})$.}
\end{re}

 \begin{proof}
By Lemma \ref{projekcja} we can assume that $m=n.$ Let $\gamma(t) \in \R^n$ be a semialgebraic curve such that $\lim_{t\to \infty} |\gamma(t)|=+\infty $ and  $\lim_{t\to \infty} F(\gamma(t))=b$.  Let $\bar \gamma(t) $ be the image of $\gamma (t)$ under the  canonical
embedding $\R^n\ni (x_1, \dots,x_n)\mapsto (1:x_1:\dots:x_n) \in  \mathbb{P}^n$. Since  $\gamma$ is semi-algebraic
there exists  $a : =\lim_{t\to \infty} \bar\gamma(t) \in H_0$, where $H_0$  stands for the hyperplane at infinity.
Let us denote  by $Y$ the graph of $F$, embedded in  $  \mathbb{P}^n\times \R^n$, and by $X$ its Zariski closure  in $  \mathbb{P}^n\times \R^n$. Note that the point $( a, b)$ belongs to the closure (in the strong topology) of $Y$.
Let $\bar f_i$ stand  for the homogenization of the polynomial  $f_i$, that is,
$$
\bar f_i(x_0,x_1, \dots, x_n) = x_0^{d_i} f_i\left(\frac{x_1}{x_0}, \dots, \frac{x_n}{x_0}\right).
$$

Hence  $X\subset    \mathbb{P}^n\times \R^n$ is defined by the equations
$$
x_0^{d_i}y_i =\bar  f_i(x_0,x_1, \dots, x_n), \, i =1,\dots, n.
$$
Assume that $a_1=1$, so
$\R^n \ni(x_0,x_2, \dots, x_n)\mapsto (x_0,1,x_2, \dots, x_n)\in \mathbb P^n$ is  an affine chart around the point $a$. In this chart  $X$ is  given by the equations
$$
p_i(x_0,x_2, \dots, x_n, y_1, \dots, y_n) : = x_0^{d_i}y_i - \bar
f_i(x_0,1, x_2, \dots, x_n) =0,$$ $ \, i =1,\dots, n. $ Clearly
$\deg p_i = 1+d_i$ and $Y =X \setminus \{x_0= 0\}$  in this chart.
So we may apply Proposition \ref{selectpr} at the point $(a, b)$.
Hence there exists  an algebraic curve $C_1 \subset \R^{2n}$ of
degree $d_* \le (d+1)^n (d^n +2)^{n-1}$ such that $(a,b) \in
\overline {C_1 \cap Y}$. Now let $C$ be  the projection of $C_1$ on
$\mathbb P^n$.  Note that  the degree of $C$ is less than or equal to $d_*$.  Now we can argue as in the proof of Theorem \ref{luk} to
conclude that there is a real rational arc $x(t)$ of bidegree $D_2
= (d-1)D+1, \, D_1=D$ where $D=(d+1)^n (d^n +2)^{n-1}$, such that
 $\lim_{t\to \infty} F(x(t))=b$.

\end{proof}

\begin{defi}

By a real  rational arc we mean
a function of the form 
$$x(t)=\sum_{- D_2\le i\le D_1} a_it^i, \ t\in
\R^*,$$ where $a_i\in \R^n.$  If $a_i \ne 0$ for $i= D_1,  \, -D_2$ we  say that 
$x(t)$ is of bidegree $(D_1,D_2).$
\end{defi}
We  identify an  arc $x(t)$ with
its coefficients. Clearly the space of all real  rational arcs of bidegree at most  $(D_1,D_2)$ is isomorphic to
$\R^{n(1+D_1+D_2)}$.

\begin{defi}
Let $F=(f_1,\dots ,f_m) :\R^n\to\R^m$ be a generically finite
polynomial mapping which is not proper. Assume that $\deg f_i\le
d$ and $D=(d+1)^n (d^n +2)^{n-1}$  By  the asymptotic variety of real rational
arcs of the mapping $F$ we mean the variety $AV_\R(F)\subset
\R^{n(dD+2)}$ which consists of  those real rational arcs $x(t)$
of bidegree at most  $ (D, (d-1)D+1 )$ such that

a) $F(x(t))=c_0+\sum_{i=1}^\infty c_i/t^i,$

b) $\sum_{i>0} \sum^n_{j=1} a_{ij}^2 =1$, where
$a_i=(a_{i1},\dots ,a_{in}).$

\noindent If we omit  condition b) we get the definition of the
generalized asymptotic variety of real arcs,  $GAV_\R(F).$
\end{defi}

\begin{re}
{\rm Condition b) ensures that the arc $x(t)$ "goes to
infinity".}
\end{re}

As  before we see that $AV_\R(F), GAV_\R(F)$ are algebraic  subsets
of $\R^{n(dD + 2)}$. 
 Recall that we  identify an  arc $x(t)$ with its coefficients $a = (a_{ij})\in \R^{n(2+ \prod^n_{i=1} d_i )}$.
Moreover, for $x(t)\in AV_R(F) $, respectively $x(t)\in GAV_\R(F))$,
we have $F(x(t))= \sum_{i=0}^\infty c_i(a)/t^i.$  Note that again
the function $c_0: AV_\R(F)\to \R^m$ plays an important role.

\begin{pr}
$c_0(AV_\R(F))=S_F(\R).$
\end{pr}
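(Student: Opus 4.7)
The plan is to mirror, step by step, the argument already used for the complex analogue $c_0(AV(F))=S_F$, substituting Theorem \ref{lukR} for Theorem \ref{luk} and handling the fact that the normalization condition b) is now quadratic rather than linear.

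First I would prove the inclusion $c_0(AV_\R(F))\subset S_F(\R)$. Fix $x(t)=\sum_{-D_2\le i\le D_1} a_i t^i \in AV_\R(F)$ with coefficient vector $a$. By definition, $F(x(t))=c_0(a)+\sum_{i\ge 1} c_i(a)/t^i$, so $F(x(t))\to c_0(a)$ as $|t|\to\infty$. Condition b) says $\sum_{i>0}\sum_j a_{ij}^2=1$, so at least one coefficient $a_i$ with $i>0$ is nonzero; writing $x(t)=a_{i_0}t^{i_0}+\text{lower order in }|t|$ for the largest such $i_0\ge 1$, one sees $\|x(t)\|\to\infty$. Consequently $b=c_0(a)$ is the limit of $F$ along a sequence going to infinity in $\R^n$, i.e., $b\in S_F(\R)$.

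For the converse inclusion $S_F(\R)\subset c_0(AV_\R(F))$, take $b\in S_F(\R)$. Theorem \ref{lukR} furnishes a real rational arc $y(t)=\sum_{-(d-1)D-1\le i\le D}a_i t^i$ with $\sum_{i>0}|a_i|>0$ and $\lim_{t\to\infty}F(y(t))=b$; its bidegree is at most $(D,(d-1)D+1)$, precisely the bidegree allowed in the definition of $AV_\R(F)$. Expanding $F(y(t))$ as a Laurent series in $1/t$ produces coefficients $c_i(a)$ with $c_0(a)=b$ and $c_i(a)=0$ for $i<0$, which is condition a). It remains to enforce condition b) by a reparametrization $t\mapsto \lambda t$ with $\lambda\in\R_{>0}$, which leaves the value of $b$ unchanged and replaces the coefficients $a_i$ by $\lambda^i a_i$. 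The function
\[
\varphi(\lambda)=\sum_{i>0}\lambda^{2i}\sum_{j=1}^n a_{ij}^2
\]
is a real polynomial in $\lambda$ with $\varphi(0)=0$ and $\varphi(\lambda)\to +\infty$ as $\lambda\to+\infty$ (since $\sum_{i>0}|a_i|>0$ guarantees a nontrivial leading term). By the intermediate value theorem there is $\lambda_0>0$ with $\varphi(\lambda_0)=1$, and then the arc $x(t):=y(\lambda_0 t)$ lies in $AV_\R(F)$ and satisfies $c_0(x(t))=b$.

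The only subtlety compared with the complex case is the quadratic normalization in b): the linear rescaling $t\mapsto \lambda t$ interacts with $\sum a_{ij}^2$ through the even powers $\lambda^{2i}$, so I would just have to note that this polynomial in $\lambda$ still ranges over $[0,+\infty)$, which is immediate. Beyond this minor adaptation, the argument is an essentially formal transcription of the complex case, and I expect no real obstacle.
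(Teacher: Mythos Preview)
Your proposal is correct and follows essentially the same approach as the paper's proof: both directions are handled identically, using condition b) to ensure the arc escapes to infinity for the forward inclusion, and invoking Theorem \ref{lukR} followed by the rescaling $t\mapsto\lambda t$ for the reverse inclusion. Your treatment is in fact slightly more detailed, spelling out the intermediate value argument for the existence of $\lambda$ where the paper simply asserts it.
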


\begin{proof}
Let $x(t)=\sum a_it^i\in AV_\R(F).$ Then
$F(x(t))=c_0(a)+\sum_{i=1}^\infty c_i(a)/t^i$; this implies that
$c_0(a)\in S_F.$ Conversely, let $b\in S_F.$ By Theorem \ref{lukR}
we can find a rational arc $x(t)=\sum_{i=-(d-1)D-1}^D a_it^i$ such
that $\lim_{t\to \infty} F(x(t))=b.$  There exists a $\lambda\in\R$
such that $\sum_{i>0} \sum^n_{j=1} \lambda^{2i} a_{ij}^2 =1.$
Change a parametrization by $t\mapsto\lambda t.$ The new arc
$x'(t):=x(\lambda t)$ belongs to $AV_\R(F)$ and $c_0(x'(t))=b.$
\end{proof}

Now let $f\in \R[x_1,\dots ,x_n]$ be a polynomial. Let us define a
polynomial mapping $\Phi : \R^n \to \R \times \R^{N}$  by
$$\Phi = \left(f,\frac{\partial f}{\partial x_1},\dots,\frac{\partial f}{\partial x_n}, h_{11} ,h_{12},\dots,h_{nn}\right),
$$
where $h_{ij}=x_i\frac{\partial f}{\partial x_j},\, i=1, \dots,n,\,j=1, \dots,n.$

\begin{defi}
Let $\Phi$ be as above. Consider the mapping $c_0: AV_\R(\Phi)\to
\R^{N}$ and the line $L:=\R\times \{ (0,\dots ,0)\}\subset \R\times
\R^N$. By the bifurcation variety of real rational arcs of the
polynomial $f$  we mean the variety
$$BV_\R(f)= \{ x(t)\in AV_\R(\Phi) : x(t)\in c_0^{-1}(L)\}.$$
Similarly we define
$$GBV_\R(f)= \{ x(t)\in GAV_\R(\Phi) : x(t)\in c_0^{-1}(L)\}.$$
\end{defi}

As an immediate consequence of \cite{j-k} we have:

\begin{pr}
Let $K(f)=K_0(f)\cup K_\infty(f)$ denote the set of generalized critical values of a real polynomial $f.$ If we identify the line
$L=\R\times \{ (0,\dots,0)\}\subset \R\times \R^N$ with $\R$, then
we have $c_0(BV_\R(f))=K_\infty(f)$ and
$c_0(GBV_\R(f))=K(f).$
\end{pr}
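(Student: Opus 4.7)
The plan is to mirror the complex-case proof by applying the preceding proposition $c_0(AV_\R(F))=S_F(\R)$ to the auxiliary mapping $\Phi=(f,\partial_1 f,\dots,\partial_n f, h_{11},\dots,h_{nn})$. The crucial algebraic identity is
$$
\|x\|^2\,\|df(x)\|^2 \;=\; \sum_{i,j}\bigl(x_i\,\partial_j f(x)\bigr)^2 \;=\;\sum_{i,j}h_{ij}(x)^2,
$$
which translates Malgrange's condition $\|x\|\,\|df(x)\|\to 0$ into the simultaneous vanishing of the partial-derivative coordinates and the $h_{ij}$ coordinates of $\Phi$.

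To show $c_0(BV_\R(f))=K_\infty(f)$, take first an arc $x(t)\in BV_\R(f)$. Then $\Phi(x(t))=c_0+\sum_{i\ge 1}c_i/t^i$ with $c_0=(b,0,\dots,0)\in L$, so along the arc we have $f\to b$, $\partial_j f\to 0$, and $h_{ij}\to 0$. Condition b) in the definition of $AV_\R(\Phi)$ guarantees $\|x(t)\|\to\infty$ as $|t|\to\infty$, and the displayed identity then yields $\|x(t)\|\,\|df(x(t))\|\to 0$. Any sequence $t_\nu\to\infty$ produces a witnessing sequence $x_\nu=x(t_\nu)$ for $b\in K_\infty(f)$. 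Conversely, given $b\in K_\infty(f)$ with witnesses $x_\nu\to\infty$, one has $|\partial_j f(x_\nu)|\le\|df(x_\nu)\|\to 0$ and $|h_{ij}(x_\nu)|\le\|x_\nu\|\,\|df(x_\nu)\|\to 0$, so $\Phi(x_\nu)\to(b,0,\dots,0)$. Thus $(b,0,\dots,0)\in S_\Phi(\R)$, and the preceding proposition $c_0(AV_\R(\Phi))=S_\Phi(\R)$ yields an arc in $AV_\R(\Phi)$ whose $c_0$-value equals $(b,0,\dots,0)\in L$; this arc lies in $BV_\R(f)$ and projects under the identification $L\cong\R$ to $b$.

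For $c_0(GBV_\R(f))=K(f)$ I would proceed analogously. The inclusion $K_\infty(f)\subset c_0(GBV_\R(f))$ is immediate from $AV_\R(\Phi)\subset GAV_\R(\Phi)$, which gives $BV_\R(f)\subset GBV_\R(f)$. To get $K_0(f)\subset c_0(GBV_\R(f))$, given $b=f(x_0)$ with $df(x_0)=0$ the constant arc $x(t)\equiv x_0$ satisfies $\Phi(x(t))\equiv(b,0,\dots,0)$, so all $c_i$ with $i\ge 1$ vanish and $c_0\in L$; hence $x_0\in GBV_\R(f)$ realizes $b$. For the reverse inclusion, write $x(t)\in GBV_\R(f)$ and split according to whether some coefficient $a_i$ with $i>0$ is nonzero. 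If so, $\|x(t)\|\to\infty$ and the $BV_\R$ argument places $b\in K_\infty(f)$. Otherwise $a_i=0$ for all $i>0$, so $x(t)\to a_0$ as $|t|\to\infty$; by continuity $\Phi(a_0)=(b,0,\dots,0)$, i.e., $df(a_0)=0$ and $f(a_0)=b$, placing $b\in K_0(f)$.

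The only delicate point is the dichotomy in the last step: a generalized-bifurcation arc either escapes to infinity and contributes to $K_\infty(f)$, or degenerates into a constant-at-infinity arc sitting at a genuine critical point and contributes to $K_0(f)$. Everything else is a direct translation of the complex argument and reduces to the Malgrange identity together with the real effective curve-selection supplied by $c_0(AV_\R(\Phi))=S_\Phi(\R)$.
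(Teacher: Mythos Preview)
Your argument is correct and is exactly the content the paper suppresses behind the phrase ``an immediate consequence of \cite{j-k}''. The key input from \cite{j-k} is precisely the identification $K_\infty(f)=S_\Phi\cap L$ (via the identity $\sum_{i,j}h_{ij}^2=\|x\|^2\|df\|^2$), which you have unpacked and combined with the preceding proposition $c_0(AV_\R(\Phi))=S_\Phi(\R)$; your dichotomy for the $GBV_\R$ case (arc escapes to infinity versus arc limits to a genuine critical point) is the right way to handle $K_0(f)$ once condition b) is dropped.
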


\section{Real algorithm}

In this section we describe an algorithm to compute the set
$K_\infty(f)$ of asymptotic critical values as well as the set
$K(f)$  of  generalized critical values of a real
polynomial $f\in \R[x_1,\dots,x_n].$ Let $\deg f=d$ and
$D_1=(d+1)^n (d^n +2)^{n-1}, D_2=(d-1)D_1+1.$

\vspace{0.5mm}

{\bf Algorithm for the set $K_\infty(f)$.}

1) Compute equations $g_\alpha$ for the variety $BV_\R(f):$

a) consider an arc $x(t)=\sum_{-D_2}^{D_1} a_i t^i\in
\R^{n(D_1+D_2+1)}$,

b) compute $f(x(t))=\sum c_i(a) t^i,$

c) compute $\frac{\partial f}{\partial x_i}(x(t))=\sum d_{ik}(a)
t^k, i=1,\dots,n$

d) compute $\frac{\partial f}{\partial x_i}(x(t))x_j(t)=\sum
e_{ijk}(a) t^k, i,j=1, \dots,n$,

e) equations for $BV_\R(f)$ are $c_i=0$ for $i>0,$ $d_{ik}=0$ for
$k\ge 0, i=1,\dots,n$, $e_{ijk}=0$ for $k\ge 0, i,j=1,\dots,n$ and
$\sum_{i>0} \sum^n_{j=1} a_{ij}^2 =1$, where
$a_i=(a_{i1},\dots,a_{in}).$

2) Form a polynomial $G=\sum_\alpha g_\alpha^2,$ where $g_\alpha$
are $c_i$ for $i>0,$ or $d_{ik}$ for $k\ge 0, i=1,\dots,n$, or
$e_{ijk}$ for $k\ge 0, i,j=1,\dots,n$ or $\sum_{i>0} \sum^n_{j=1}
a_{ij}^2 - 1.$

3) 
 Using Algebraic Sampling \cite[Theorem 11.61 p. 412]{b-p-r}) we can find a finite set 
 $A\subset \{G=0\}$ which meets every connected component of $\{G=0\}$.

4) $K_\infty(f)=\{c_0(a):\, a\in A\}$.

\vspace{5mm} \noindent If we  replace above the variety $BV_\R(f)$
by the variety $GAV_\R(f)$ we get an  algorithm for computing
$K(f)$. Actually it is enough  to delete   the
equation $\sum_{i>0} \sum^n_{j=1} a_{ij}^2 =1$
in items  1e) and  2).

{\bf Conclusion.} In the paper we have developed  only  the general geometric aspect of our method.
In a forthcoming  work we plan to discuss  effectiveness  of the algorithms and possibly technical improvements.
Recently M. Raibaut  \cite{ra} introduced the motivic bifurcation set of a complex polynomial. It seems that our 
approach can give a more geometric explanation of his work.

      \end{document}